\DeclareMathOperator{\Frob}{Frob}
\DeclareMathOperator{\End}{End}
\DeclareMathOperator{\Gal}{Gal}
\DeclareMathOperator{\Ind}{Ind}
\DeclareMathOperator{\GL}{GL}
\DeclareMathOperator{\SL}{SL}
\DeclareMathOperator{\PGL}{PGL}
\DeclareMathOperator{\Tr}{Tr}
\DeclareMathOperator{\Id}{Id}
\DeclareMathOperator{\Cl}{Cl}
\DeclareMathOperator{\dimn}{dim}
\DeclareMathOperator{\Hom}{Hom}
\DeclareMathOperator{\rank}{rank}
\newcommand{\R}{\mathbb{R}}
\newcommand{\C}{\mathbb{C}}
\newcommand{\Z}{\mathbb{Z}}
\newcommand{\Q}{\mathbb{Q}}
\newcommand{\F}{\mathbb{F}}
\newcommand{\T}{\mathbb{T}}
\newcommand{\tensor}{\otimes}
\providecommand{\customgenericname}{}
\newcommand{\newcustomtheorem}[2]{%
  \newenvironment{#1}[1]
  {%
   \renewcommand\customgenericname{#2}%
   \renewcommand\theinnercustomgeneric{##1}%
   \innercustomgeneric
  }
  {\endinnercustomgeneric}
}
\newtheorem{thm}{Theorem}[section]
\newtheorem{lem}[thm]{Lemma}
\newtheorem{prop}[thm]{Proposition}
\theoremstyle{definition}
\theoremstyle{remark}
\title{On Seven Conjectures of Kedlaya and Medvedovsky}
\author{Noah Taylor\footnote{The author is supported in part by NSF Grant DMS-$1701703$.}}
\date{January 6, 2020}
\begin{document}
\maketitle
\section{Introduction}
Let $\overline{\rho}: G_{\Q}\rightarrow\GL(2, \overline{\F}_2)$ be a finite-image two-dimensional mod-2 Galois representation.  We say $\overline{\rho}$ is dihedral if the image of $\pi\circ\overline{\rho}: G_{\Q}\rightarrow\PGL(2, \overline{\F}_2)$ is isomorphic to a finite dihedral group, where $\pi: \GL(2)\rightarrow\PGL(2)$ is the usual projection.  We say $\overline{\rho}$ is modular of level $N$ if it is the reduction ${} \bmod 2$ of a representation $\rho_f$ associated to a modular eigenform $f\in S_2(\Gamma_0(N), \overline{\Z}_2)$.

We say that a characteristic $0$ representation $\rho:G_{\Q}\rightarrow\GL_2(\overline{\Q}_2)$ is ordinary at $2$ if its restriction to the inertia at $2$ is reducible.  We also say an eigenform $f$ with coefficients in $\overline{\Z}_2$ is ordinary if the coefficient $a(2)$ of $q^2$ in its $q$-expansion is a unit ${}\bmod 2$.  The terminology is consistent, because by a theorem of Deligne and Fontaine, if $\rho=\rho_f$ is modular, then $\rho_f$ is ordinary if and only if $f$ is ordinary.

In \cite{kedlaya2019}, Kedlaya and Medvedovsky prove that if a ${}\bmod 2$ representation is dihedral, modular and ordinary of prime level $N$, then it must be the induction of a character of the class group $\Cl(K)$ of a quadratic extension $K=\Q(\sqrt{\pm N})/ \Q$ to $\Q$ \cite[Section~5.2]{kedlaya2019}.  They then analyze all cases of $N\bmod 8$ to determine how many distinct ${} \bmod 2$ representations arise from this construction.  Finally, they conjecture lower bounds for the number of $\overline{\Z}_2$ eigenforms whose ${} \bmod 2$ representations $\overline{\rho}_f$ are isomorphic to each of the representations obtained above \cite[Conjecture~13]{kedlaya2019}.  The purpose of the current paper is to prove this conjecture, reproduced below.

We let $\T_2^{\text{an}}$ denote the anemic Hecke algebra inside $\End(S_2(\Gamma_0(N),\overline{\Z}_2))$ generated as a $\mathbb{Z}_2$-algebra by the Hecke operators $T_k$ for $(k, 2N)=1$, and we let $\T_2$ denote the full Hecke algebra, namely $\T_2=\T_2^{\text{an}}[T_2, U_N]$.  Maximal ideals $\mathfrak{m}$ of $\T_2^{\text{an}}$ correspond to classes of ${} \bmod 2$ eigenforms via $\T_2^{\text{an}}\rightarrow\T_2^{\text{an}}/\mathfrak{m}\xhookrightarrow{}\overline{\F}_2$, where the image of $T_k$ in the quotient is mapped to the coefficient $a(k)$ of the form.  Thus maximal ideals of $\T_2^{\text{an}}$ correspond to modular representations via the Eichler-Shimura construction.  We say that $\mathfrak{m}$ is $K$-dihedral if the representation corresponding to $\mathfrak{m}$ is dihedral in the above sense, and the quadratic extension from which it is an induction is $K$.  (Notice that given $\overline{\rho}$, $K$ is uniquely determined as the quadratic extension of $\Q$ inside the fixed field of the kernel of $\overline{\rho}$ that is ramified at all primes at which $\overline{\rho}$ is ramified.)  We write $S_2(N)_{\mathfrak{m}}$ to denote the space of all ${}\bmod 2$ modular forms on which $\mathfrak{m}$ acts nilpotently.

\begin{thm}[{\cite[Conjecture 13]{kedlaya2019}}]\label{kedmev}Let $N$ be an odd prime and $\mathfrak{m}$ a maximal ideal of $\T_2^{\textup{an}}(N)$.
\begin{enumerate}[font=\upshape]\item Suppose $N\equiv 1\bmod 8$.
\begin{enumerate}[font=\upshape]\item If $\mathfrak{m}$ is $\Q(\sqrt{N})$-dihedral, then $\dimn S_2(N)_{\mathfrak{m}}\geq4$.
\item If $\mathfrak{m}$ is $\Q(\sqrt{-N})$-dihedral, then $\dimn S_2(N)_{\mathfrak{m}}\geq h(-N)^{\textup{even}}$.
\item If $\mathfrak{m}$ is reducible, then $\dimn S_2(N)_{\mathfrak{m}}\geq\frac{h(-N)^{\textup{even}}-2}{2}$.
\end{enumerate}
\item Suppose $N\equiv 5\bmod 8$.
\begin{enumerate}[font=\upshape]\item If $\mathfrak{m}$ is ordinary $\Q(\sqrt{N})$-dihedral, then $\dimn S_2(N)_{\mathfrak{m}}\geq 4$.
\item If $\mathfrak{m}$ is $\Q(\sqrt{-N})$-dihedral, then $\dimn S_2(N)_{\mathfrak{m}}\geq 2$.
\end{enumerate}
\item Suppose $N\equiv 3\bmod 4$ and $K=\Q(\sqrt{\pm N})$.
\begin{enumerate}[font=\upshape]\item If $\mathfrak{m}$ is ordinary $K$-dihedral, then $\dimn S_2(N)_{\mathfrak{m}}\geq 2$.
\end{enumerate}
\end{enumerate}
\end{thm}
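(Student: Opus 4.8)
The plan is to convert each lower bound into a lower bound on the $\overline{\Z}_2$-rank of a localized Hecke algebra, and then to read that rank off from a Galois deformation ring. The first step is formal: the $q$-expansion pairing makes $S_2(\Gamma_0(N),\overline{\Z}_2)$ a faithful $\T_2(N)$-module isomorphic to $\Hom_{\overline{\Z}_2}(\T_2(N),\overline{\Z}_2)$, so $\dimn_{\overline{\F}_2}S_2(N)_{\mathfrak m}$ is at least (and, granting the standard comparison of Katz mod-$2$ forms with reductions, equal to) the number of $\overline{\Q}_2$-eigenforms $f\in S_2(\Gamma_0(N))$ with $\overline{\rho}_f\cong\overline{\rho}_{\mathfrak m}$. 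Thus the theorem reduces to producing that many characteristic-zero newforms of weight $2$ and level $N$ lifting $\overline{\rho}:=\overline{\rho}_{\mathfrak m}$; one checks that these $f$ cannot themselves be dihedral, since an induced deformation of $\overline{\rho}$ from $K$ is incompatible with the special-at-$N$, trivial-nebentypus local condition forced by the level $\Gamma_0(N)$, so the bounds really count \emph{congruences} between level-$N$ cusp forms and the dihedral picture, quantities governed by the arithmetic (class groups, units) of $K$.

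Next I would realize the lifts as $\overline{\Q}_2$-points of a deformation ring. Let $R$ be the universal deformation ring of $\overline{\rho}$ parametrizing deformations unramified outside $2N$, with cyclotomic determinant, special (Steinberg-or-unramified) at $N$, and satisfying at $2$ the local condition coming from weight $2$ and level prime to $2$ --- sharpened to an ordinary condition at $2$ in the cases where $\mathfrak m$ is ordinary; in the reducible case 1(c) one replaces $R$ by the corresponding reducible pseudo-deformation ring in the sense of Bella\"{\i}che--Chenevier and Wake--Wang-Erickson. The key modularity input is that the natural surjection $R\twoheadrightarrow\T_2(N)_{\mathfrak m}$ is an isomorphism, so that $\dimn_{\overline{\F}_2}S_2(N)_{\mathfrak m}=\dimn_{\overline{\F}_2}(R/2)=\rank_{\overline{\Z}_2}R$, and it remains to bound this from below. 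Here the dihedral hypothesis is decisive: writing $\overline{\rho}=\Ind_{G_K}^{G_\Q}\chi$ with $K=\Q(\sqrt{\pm N})$ and $\sigma$ the nontrivial automorphism of $K$, one has $\mathrm{ad}^0\overline{\rho}\cong\overline{\F}_2\oplus\Ind_{G_K}^{G_\Q}\!\bigl(\chi(\chi^{\sigma})^{-1}\bigr)$ over $\overline{\F}_2$, the first summand carrying the trivial action since the quadratic character cutting out $K$ becomes trivial modulo $2$. By Shapiro's lemma the tangent space $H^1_\Sigma(G_{\Q,2N},\mathrm{ad}^0\overline{\rho})$ and the obstruction group $H^2$ are then expressed through $\Hom(G_{\Q,2N},\overline{\F}_2)$ and cohomology of $G_K$ with coefficients in the odd-order character $\chi(\chi^{\sigma})^{-1}$, and these are computed --- via the global Euler characteristic formula and the local conditions at $2$, $N$ and $\infty$ --- by genus theory, unit groups, and ray class groups of $K$. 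From these dimensions one reads off a presentation of $R/2$ over $\overline{\F}_2$, hence the required lower bound on $\rank_{\overline{\Z}_2}R$.

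With this machine in place the seven sub-statements become a case analysis governed by two inputs. The splitting type of $2$ in $K$ --- split for $K=\Q(\sqrt N)$, $N\equiv1\bmod8$; inert for $K=\Q(\sqrt N)$, $N\equiv5\bmod8$; ramified in the $N\equiv3\bmod4$ cases and for $K=\Q(\sqrt{-N})$ with $N\equiv1\bmod4$ --- fixes the local deformation problem at $2$ and explains why the ordinary hypothesis is imposed exactly when $2$ is inert, so that the ordinary deformation functor is non-trivial. The $2$-part of the relevant class group of $K$ supplies the constants: when genus theory forces it to be as small as possible one gets the bare bounds $4$ (cases 1(a), 2(a)) and $2$ (cases 2(b), 3(a)), and when it genuinely scales one gets $h(-N)^{\textup{even}}$ (case 1(b)). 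Case 1(c) is essentially a mod-$2$ Herbrand--Ribet computation: the number of weight-$2$ level-$N$ cusp forms congruent to the Eisenstein series is controlled by $h(\Q(\sqrt{-N}))$, and running Mazur's Eisenstein-ideal argument with the $2$-adic class-number input --- after splitting off the Eisenstein line --- gives the cuspidal contribution $\frac{h(-N)^{\textup{even}}-2}{2}$.

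The hard part is the modularity identification $R\cong\T_2(N)_{\mathfrak m}$ at $p=2$. Because $\overline{\rho}$ has small image (dihedral, or reducible) it is not adequate or ``big'', so Taylor--Wiles--Kisin patching does not apply off the shelf, and $p=2$ is anyway outside the scope of most modularity-lifting results. I expect to get around this using the dihedral structure itself: base-changing the deformation problem along $K$, over which $\overline{\rho}$ becomes a sum of characters and the deformation theory is abelian and transparent (class field theory of $K$), and matching with the associated CM forms, together with the residually-reducible modularity results of Skinner--Wiles and Wake--Wang-Erickson for case 1(c). A subtler point is that $R\twoheadrightarrow\T_2(N)_{\mathfrak m}$ by itself gives only an \emph{upper} bound for $\dimn S_2(N)_{\mathfrak m}$, so the lower bounds genuinely require the full isomorphism together with an honest computation --- not merely an estimate --- of the obstruction group $H^2$ over $K$ and of the complete-intersection structure of $R$; pinning down the exact values $4$, $2$, $h(-N)^{\textup{even}}$ and $\frac{h(-N)^{\textup{even}}-2}{2}$ is then a matter of careful genus theory, which I would expect to occupy the bulk of the argument.
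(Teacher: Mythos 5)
Your proposal has a genuine gap at its pivot point: everything rests on the isomorphism $R\cong\T_2(N)_{\mathfrak m}$, and you acknowledge but do not supply a proof of it. There is no off-the-shelf modularity lifting theorem that applies here: $p=2$, the residual image is dihedral or reducible (hence never ``big''/adequate), and in case 1(a) the representation is even totally real, so Taylor--Wiles--Kisin patching and the Skinner--Wiles/Wake--Wang-Erickson residually reducible machinery (which are stated for odd $p$) are all unavailable. The paper is structured precisely to avoid ever proving an $R=\T$ theorem. A second, independent problem is directional: even granting $R=\T$, computing $H^1_\Sigma$ and $H^2$ via Shapiro's lemma and the Euler characteristic formula yields a presentation of $R/2$ with so many generators and at most so many relations, which bounds $\dim_{\overline{\F}_2}R/2$ from \emph{above} in the unobstructed direction and gives a lower bound only if you can certify that the relations are genuinely there (e.g.\ via a complete intersection statement), which again circles back to $R=\T$. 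To get a lower bound one must exhibit honest quotients or honest congruences, and your sketch does not do that.

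For comparison, the paper's proof is much more hands-on and avoids deformation theory almost entirely. The two tools are: (i) a multiplicity bound $\dim_{k_{\mathfrak a}}\T_{\mathfrak a}/(2)\geq 2\cdot(\text{multiplicity of }\overline{\rho}\text{ in }J_0(N)[\mathfrak a])$ for totally real $\overline{\rho}$, proved by comparing $J_0(N)[2](\R)$ with the $\mathfrak a$-primary pieces using Merel's connectedness of $J_0(N)(\R)$ and Mazur's rank-two Tate module lemma, combined with multiplicity-two results of Wiese and a case analysis on the characteristic polynomial of $\overline{\rho}(\Frob_2)$ (this handles 1(a), 2(a), 3 with $K=\Q(\sqrt N)$); (ii) for the imaginary cases, an explicit construction of a weight-one theta-series eigenform valued in $\overline{\F}_2[x]/(x^{2^e}-1)$, multiplied by an explicit weight-one form congruent to $1\bmod 2$ and pushed down to level $N$ via $U_2$ and the Calegari--Emerton level-lowering theorem, producing a surjection $\T_{\mathfrak m}\otimes\overline{\F}_2\twoheadrightarrow\overline{\F}_2[x]/(x^{2^e}-1)$ and hence the lower bound $2^e$ directly. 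The reducible case 1(c) is quoted from Calegari--Emerton's computation $\T^{\textup{an}}_{\mathfrak m}/(2)\cong\F_2[x]/(x^{2^{e-1}})$, which is the one place where something like your deformation-theoretic picture is actually carried out, and only for the Eisenstein ideal. Your instinct that genus theory supplies the constants is correct and matches the paper, but the bridge from class groups to Hecke algebras must be built by explicit congruence constructions rather than by an unproven $R=\T$ at $p=2$.
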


The methods we use in proving this conjecture vary somewhat among the cases listed above.  Moreover, though part $3$ is listed as a single case, we break up its proof into the cases $K=\Q(\sqrt{N})$ and $K=\Q(\sqrt{-N})$.  Thus we recognize \cite[Conjecture 13]{kedlaya2019} as $7$ separate conjectures, explaining the title of this note.

\subsection{Eigenspace dimension and modular exponent}
There is a relation between our work and the problem of understanding the parity of the modular exponent of a modular abelian variety $A=A_f$ as studied in \cite{MR2867910}.  The problems are not exactly the same, however: the dimension of $S_2(N)_{\mathfrak{m}}$ is greater than $1$ if and only if there exists two distinct eigenforms $f$ and $f'$ with $\overline{\rho}_f=\overline{\rho}_{f'}=\overline{\rho}_{\mathfrak{m}}$.  On the other hand, the modular degree is even only when there exists a congruence ${}\bmod\mathfrak{p}$ between eigenforms which are not $G_{\Q}$-conjugate, for some prime $\mathfrak{p}$ above $2$.  For example, in the case $N=29$, we know that $S_2(29)$ is $2$ dimensional, spanned by $f=q+(-1+\sqrt{2})q^2+(1-\sqrt{2})q^3+\ldots$ and $f'=q+(-1-\sqrt{2})q^2+(1+\sqrt{2})q^3+\ldots$.  These have the same ${}\bmod 2$ representation; in fact, they are even congruent ${}\bmod2$.  But the corresponding quotient of $J_0(29)$ is $J_0(29)$ itself, which is simple, so the modular exponent of these forms is $1$.

In some cases, such as when the abelian variety is an ordinary elliptic curve over $\Q$, the problems coincide, and thus this paper is related to (and generalizes) arguments from \cite{MR2460912}.  If~$A$ is a (modular) ordinary rational elliptic curve, then there is a corresponding homomorphism~$\T \rightarrow \Z$.  If~$A$ has even modular degree, then there certainly exist $2$-adic congruences between the modular eigenform~$f$ associated to~$A$ and other forms, and hence an eigenform~$f' \ne f$ with~$\overline{\rho}_f = \overline{\rho}_{f'}$.  Conversely, suppose that there exists such an~$f'$. Because~$f$ has coefficients over~$\Q$, the form~$f'$ cannot be a Galois conjugate of~$f$. Thus it suffices to show that the equality~$\overline{\rho}_{f} = \overline{\rho}_{f'}$ can be upgraded to a congruence between~$f$ and~$f'$. The only ambiguity arises from the coefficients of~$q^2$ and~$q^N$. By Theorem~\ref{deligne} below, we see that the coefficient of~$q^2$ is determined up to its inverse by the mod~$2$ representation. Yet, for~$A$, the coefficient of~$q^2$ is automatically~$1$ by ordinarity and rationality.  We also prove in Lemma~5.1 that~$U_N$ is in the Hecke algebra~$\T^{\text{an}}_2$, and thus~$f$ must be congruent to~$f'$.
\subsection{Reduction}
Given a maximal ideal $\mathfrak{m}$ of $\T_2^{\text{an}}$, we wish to count the dimension of the space $\Lambda$ of $\Z_2$-module maps\[\phi:\T_2\rightarrow \overline{\F}_2\text{ so that }\mathfrak{m}^k(\phi|_{\T_2^{\text{an}}})=0\text{ for some }k\geq0\] as an $\overline{\F}_2$-vector space, where $\T_2^{\text{an}}$ acts on $\phi$ by $x\phi(y)=\phi(xy)$.  We know that $\T_2$ and $\T_2^{\text{an}}$ are finite and flat over $\Z_2$, and thus complete semilocal rings.  It then follows that we can write \[\T_2=\bigoplus_{\mathfrak{a}\text{ maximal}}\T_{\mathfrak{a}},\]and a similar statement for $\T_2^{\text{an}}$.  We thus study $\T_{\mathfrak{m}}^{\text{an}}$ and remove the restriction that $\mathfrak{m}$ is nilpotent.

\begin{prop}\label{prop1}The dimension of $\Lambda$ equals \[\displaystyle\sum_{\mathfrak{m}\subseteq\mathfrak{a}}[k_{\mathfrak{a}}:\F_2]\dim_{k_{\mathfrak{a}}}\T_{\mathfrak{a}}/(2),\] where the sum runs over all maximal ideals $\mathfrak{a}$ of $\T_2$ containing $\mathfrak{m}$, and $k_{\mathfrak{a}}$ is the residue field corresponding to $\mathfrak{a}$.\end{prop}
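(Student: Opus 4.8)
The plan is to reduce everything modulo $2$ and then read off both sides from the idempotent decompositions of $\T_2^{\text{an}}$ and $\T_2$. Since $\overline{\F}_2$ is annihilated by $2$, a $\Z_2$-linear map $\T_2\to\overline{\F}_2$ is the same thing as an $\F_2$-linear map $\T_2/2\T_2\to\overline{\F}_2$, so I will compute $\overline{\F}_2$-dimensions as $\F_2$-dimensions of the corresponding subquotients of $\T_2/2\T_2$. Write $\T_2^{\text{an}}=\bigoplus_{\mathfrak n}\T^{\text{an}}_{\mathfrak n}$ with orthogonal idempotents $e_{\mathfrak n}$, the sum over the maximal ideals of $\T_2^{\text{an}}$. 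First I would unwind the defining condition on $\Lambda$: for $\mathfrak n\ne\mathfrak m$ the idempotent $e_{\mathfrak n}$ lies in $\mathfrak m$, hence $\T^{\text{an}}_{\mathfrak n}=e_{\mathfrak n}\T_2^{\text{an}}\subseteq\mathfrak m^k$ for every $k\ge 1$; while $(e_{\mathfrak m}\mathfrak m)^k\subseteq 2\,\T^{\text{an}}_{\mathfrak m}$ once $k$ is large, the maximal ideal of the Artinian ring $\T^{\text{an}}_{\mathfrak m}/2$ being nilpotent. As $\phi$ automatically kills $2\T_2$, the condition "$\mathfrak m^k(\phi|_{\T_2^{\text{an}}})=0$ for some $k$" is therefore equivalent to "$\phi$ vanishes on the $\Z_2$-submodule $(1-e_{\mathfrak m})\T_2^{\text{an}}=\bigoplus_{\mathfrak n\ne\mathfrak m}\T^{\text{an}}_{\mathfrak n}$ of $\T_2$."

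Next I would bring in the decomposition $\T_2=\bigoplus_{\mathfrak a}\T_{\mathfrak a}$ over the maximal ideals of $\T_2$, together with the contraction $\mathfrak a\mapsto\mathfrak a\cap\T_2^{\text{an}}$; here $\mathfrak m\subseteq\mathfrak a$ iff $\mathfrak a\cap\T_2^{\text{an}}=\mathfrak m$, and $(1-e_{\mathfrak m})\T_2=\bigoplus_{\mathfrak a\cap\T_2^{\text{an}}\ne\mathfrak m}\T_{\mathfrak a}$ is exactly the ideal of $\T_2$ generated by the submodule $(1-e_{\mathfrak m})\T_2^{\text{an}}$. The maps vanishing on this whole ideal form $\Hom_{\F_2}\!\big(\big(\bigoplus_{\mathfrak m\subseteq\mathfrak a}\T_{\mathfrak a}\big)/2,\overline{\F}_2\big)$, an $\overline{\F}_2$-space of dimension $\sum_{\mathfrak m\subseteq\mathfrak a}\dim_{\F_2}\T_{\mathfrak a}/2$; and since the finite local $\F_2$-algebra $\T_{\mathfrak a}/2$ contains a coefficient field isomorphic to its residue field $k_{\mathfrak a}$ (Hensel/Cohen), one has $\dim_{\F_2}\T_{\mathfrak a}/2=[k_{\mathfrak a}:\F_2]\dim_{k_{\mathfrak a}}\T_{\mathfrak a}/2$. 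This is exactly the number claimed. So the proposition reduces to showing that vanishing on the submodule $(1-e_{\mathfrak m})\T_2^{\text{an}}$ already forces vanishing on the ambient ideal $(1-e_{\mathfrak m})\T_2$, i.e. that these two $\Z_2$-submodules of $\T_2$ have the same image in $\T_2/2\T_2$.

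This last comparison is the heart of the matter, and it is where the input of Section~5 enters; I expect it to be the main obstacle. Equivalently, one must show the image of $\T_2^{\text{an}}$ in $\T_2/2\T_2$ contains each block $\T_{\mathfrak a}/2$; by Nakayama for the finite $\Z_2$-algebra $\T_2$ this is the same as $\T_2^{\text{an}}+2\T_2=\T_2$, hence also $\T_2^{\text{an}}=\T_2$. Since $\T_2=\T_2^{\text{an}}[T_2,U_N]$ it suffices to put $T_2$ and $U_N$ into $\T_2^{\text{an}}+2\T_2$: for $U_N$ this is Lemma~5.1, which even gives $U_N\in\T_2^{\text{an}}$; and for $T_2$ one invokes Theorem~\ref{deligne}, which pins down the action of $T_2$ modulo $2$ on each block $\T_{\mathfrak a}$ in terms of the residual representation attached to $\mathfrak m$ — it is determined up to inversion, and is nilpotent in the non-ordinary case — from which one checks that $T_2$ is congruent, modulo the maximal ideal of $\T_{\mathfrak a}$, to the image of an element of $\T_2^{\text{an}}$, and Nakayama over the local ring $\T^{\text{an}}_{\mathfrak a\cap\T_2^{\text{an}}}$ upgrades this to $T_2\in\T_2^{\text{an}}+2\T_2$. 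With this in hand the submodule $(1-e_{\mathfrak m})\T_2^{\text{an}}$ and the ideal $(1-e_{\mathfrak m})\T_2$ impose the same condition on $\phi$, the two dimension counts coincide, and the proposition follows.
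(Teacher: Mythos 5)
Your block-by-block dimension count is fine: the factorization of each $\phi_{\mathfrak a}$ through $\T_{\mathfrak a}/(2)$ and the identity $\dim_{\F_2}\T_{\mathfrak a}/(2)=[k_{\mathfrak a}:\F_2]\dim_{k_{\mathfrak a}}\T_{\mathfrak a}/(2)$ via a coefficient field are exactly the paper's computation. The gap is in your final step. Having parsed the nilpotence condition as ``$\phi$ vanishes on the $\Z_2$-submodule $(1-e_{\mathfrak m})\T_2^{\text{an}}$,'' you must then show this forces vanishing on the ideal $(1-e_{\mathfrak m})\T_2$, and you propose to do so by proving $\T_2=\T_2^{\text{an}}+2\T_2$, hence $\T_2=\T_2^{\text{an}}$ by Nakayama. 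That is false in general. Lemma~5.1 does give $U_N\in\T_2^{\text{an}}$, but $T_2$ genuinely enlarges the anemic algebra: modulo a maximal ideal $\mathfrak a$ of $\T_2$ above $\mathfrak m$, the image of $T_2$ is a root of $P(x)=\det(x\Id_2-\overline{\rho}(\Frob_2))$, a quadratic over $k_{\mathfrak m}$ which need not have roots in $k_{\mathfrak m}$; when it does not, $[k_{\mathfrak a}:k_{\mathfrak m}]=2$ and $T_2\notin\T_2^{\text{an}}+\mathfrak a$, let alone $\T_2^{\text{an}}+2\T_2$, and when $P$ has two distinct roots the ideal $\mathfrak m$ splits into two maximal ideals of $\T_2$, which is equally impossible if $\T_2=\T_2^{\text{an}}$. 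These are precisely the first two cases of Section~\ref{sec21}, where the failure of $\T_2^{\text{an}}\rightarrow\T_2/(2)$ to be surjective is the \emph{source} of the lower bound $4$; and Theorem~\ref{deligne} cannot rescue the argument, since it applies only to ordinary forms and in any case only determines $a_2$ up to a choice of root of $P$.

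The fix is not to repair that step but to discard the parsing that made it necessary. The $\T_2^{\text{an}}$-action is on all of $\Hom_{\Z_2}(\T_2,\overline{\F}_2)$ via $(x\phi)(y)=\phi(xy)$ with $y$ ranging over $\T_2$ --- this is the dual description of $S_2(N)_{\mathfrak m}$, the forms on which $\mathfrak m$ acts nilpotently --- so $\mathfrak m^k\phi=0$ says $\phi$ kills the ideal $\mathfrak m^k\T_2$, not merely $\mathfrak m^k$. Since some element of $\mathfrak m$ acts invertibly on $\T_{\mathfrak a}$ whenever $\mathfrak a\not\supseteq\mathfrak m$, while $\mathfrak m\cdot e_{\mathfrak m}\T_2$ lies in the radical of $e_{\mathfrak m}\T_2$, which is nilpotent mod $2$, the condition is exactly that $\phi$ factors through $\bigoplus_{\mathfrak m\subseteq\mathfrak a}\T_{\mathfrak a}$. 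That is the paper's opening line, and from there your count of $\Hom_{\F_2}(\T_{\mathfrak a}/(2),\overline{\F}_2)$ finishes the proof with no comparison of $\T_2^{\text{an}}$ and $\T_2$ needed. (Note also that under your literal reading the proposition would actually be false: a $\phi$ supported on the difference of two blocks $\T_{\mathfrak a_1}\oplus\T_{\mathfrak a_2}$ lying over some \emph{other} maximal ideal $\mathfrak n\neq\mathfrak m$ of $\T_2^{\text{an}}$ can vanish on $(1-e_{\mathfrak m})\T_2^{\text{an}}$ without vanishing on $(1-e_{\mathfrak m})\T_2$, inflating the dimension beyond the claimed sum.)
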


\begin{proof}The inclusion of $\T_2^{\text{an}}$ into $\T_2$ induces an inclusion $\T_{\mathfrak{m}}^{\text{an}}$ into $\displaystyle\bigoplus_{\mathfrak{m}\subseteq\mathfrak{a}}\T_{\mathfrak{a}}$, and so the dimension of $\Lambda$ is the dimension of the $\overline{\F}_2$-space of maps $\phi:\displaystyle\bigoplus_{\mathfrak{m}\subseteq\mathfrak{a}}\T_{\mathfrak{a}}\rightarrow\overline{\F}_2$.  Any such map can be split into separate maps $\phi_{\mathfrak{a}}$, and all $\phi_{\mathfrak{a}}$ factor through $\T_{\mathfrak{a}}/(2)$.  So the dimension of $\Lambda$ is\[\dimn_{\overline{\F}_2}\Hom_{\Z_2}(\bigoplus_{\mathfrak{m}\subseteq\mathfrak{a}}\T_{\mathfrak{a}},\overline{\F}_2)=\sum_{\mathfrak{m}\subseteq\mathfrak{a}}\dimn_{\overline{\F}_2}\Hom_{\F_2}(\T_{\mathfrak{a}}/(2),\overline{\F}_2)=\sum_{\mathfrak{m}\subseteq\mathfrak{a}}\dimn_{\F_2}\T_{\mathfrak{a}}/(2)=\sum_{\mathfrak{m}\subseteq\mathfrak{a}}[k_{\mathfrak{a}}:\F_2]\dimn_{k_{\mathfrak{a}}}\T_{\mathfrak{a}}/(2).\]\end{proof}

The trivial lower bound $\dimn_{k_{\mathfrak{a}}}\T_{\mathfrak{a}}/(2)\geq1$ gives a lower bound on the dimension of $\Lambda$.  In the case that $\overline{\rho}$ arising from $\mathfrak{m}$ is totally real and absolutely irreducible, we prove a better bound $\dimn_{k_{\mathfrak{a}}}\T_{\mathfrak{a}}/(2)\geq 2$.  This happens when $\mathfrak{m}$ is $\Q(\sqrt{N})$-dihedral for $N>0$.  Let $J_0(N)$ denote the Jacobian of the modular curve $X_0(N)$, so that $\overline{\rho}$ appears as a subrepresentation of the two torsion points $J_0(N)[2]$.  For some maximal ideal $\mathfrak{a}$ containing $\mathfrak{m}$, let $A=J_0(N)[\mathfrak{a}]$ be the subscheme of points that are killed by $\mathfrak{a}$.  By the main result of \cite{MR1094193}, $A$ is the direct sum of copies of $\overline{\rho}$.  (This holds only when $\overline{\rho}$ is absolutely irreducible, but we cover the reducible case in \ref{sec23} without referring to Proposition \ref{prop2}.)

\begin{prop}\label{prop2}If $\mathfrak{m}$ is a maximal ideal of $\T_2^{\textup{an}}$ for which the corresponding representation $\overline{\rho}$ is totally real, then for any maximal ideal $\mathfrak{a}$ of $\T_2$ containing $\mathfrak{m}$, we have the inequality\[\dimn_{k_{\mathfrak{a}}}\T_{\mathfrak{a}}/(2)\geq 2\cdot\textup{multiplicity of }\overline{\rho}\textup{ inside }A.\]\end{prop}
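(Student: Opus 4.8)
The plan is to exploit the hypothesis that $\overline{\rho}$ is \emph{totally real}, meaning the Frobenius at $\infty$ acts trivially, equivalently complex conjugation acts as a scalar $\pm 1$ rather than as an element with eigenvalues $1$ and $-1$. I would work with the $2$-adic Tate module $T = T_2 J_0(N)$, which is a faithful module over $\T_2 \otimes \Z_2$, and localize at $\mathfrak{a}$ to obtain $T_{\mathfrak{a}}$, a module over the local ring $\T_{\mathfrak{a}}$. The quantity $\dim_{k_{\mathfrak{a}}} \T_{\mathfrak{a}}/(2)$ is the minimal number of generators of $\T_{\mathfrak{a}}$ as a $\Z_2$-algebra plus something — more precisely, since $\T_{\mathfrak{a}}$ is a finite flat $\Z_2$-algebra, $\dim_{k_{\mathfrak{a}}}\T_{\mathfrak{a}}/(2) = \mathrm{rank}_{\Z_2}\T_{\mathfrak{a}}$ divided by $[k_{\mathfrak{a}}:\F_2]$ only when $\T_{\mathfrak{a}}$ is free, which it is. So the real content is to bound the $\Z_2$-rank of $\T_{\mathfrak{a}}$ from below by $2[k_{\mathfrak{a}}:\F_2]$ times the multiplicity $\mu$ of $\overline{\rho}$ in $A = J_0(N)[\mathfrak{a}]$.

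The key step is the complex-conjugation argument. Let $c \in G_{\Q}$ be a complex conjugation. On the Tate module $T$, $c$ acts with both eigenvalues $+1$ and $-1$ occurring with equal multiplicity $g = \dim J_0(N)$ (this is a standard fact: the $+1$ and $-1$ eigenspaces of $c$ on $H_1(X_0(N),\Z_2)$ both have rank $g$, coming from the real structure of the modular curve). After localizing at $\mathfrak{a}$, the module $T_{\mathfrak{a}}$ is a $\T_{\mathfrak{a}}$-module with an action of $c$ commuting with $\T_{\mathfrak{a}}$; write $T_{\mathfrak{a}} = T_{\mathfrak{a}}^+ \oplus T_{\mathfrak{a}}^-$ for the eigenspace decomposition. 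Each $T_{\mathfrak{a}}^{\pm}$ is a $\T_{\mathfrak{a}}$-module, and — this is the crucial point — I claim each is \emph{faithful} over $\T_{\mathfrak{a}}$. Granting that, $\mathrm{rank}_{\Z_2}\T_{\mathfrak{a}} \leq \mathrm{rank}_{\Z_2} T_{\mathfrak{a}}^{\pm}$ for a suitable reason (faithful modules over a local Gorenstein ring...), and combining the two gives a factor of $2$. Reducing mod $2$: $T_{\mathfrak{a}}/(2)$ contains $J_0(N)[\mathfrak{a}] = A \cong \overline{\rho}^{\oplus \mu}$ (as $k_{\mathfrak{a}}$-vector spaces, using the Boston–Lenstra–Ribet result cited from \cite{MR1094193}), so $\dim_{\F_2} A = 2\mu \cdot [k_{\mathfrak{a}}:\F_2]$ since $\overline{\rho}$ is $2$-dimensional over $k_{\mathfrak{a}}$. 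The faithfulness of each sign-eigenspace, decomposed further by the multiplicity, yields $\dim_{k_{\mathfrak{a}}}\T_{\mathfrak{a}}/(2) \geq 2\mu$.

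To make the faithfulness precise I would instead argue directly with $A$ and its decomposition. Since $\overline{\rho}$ is totally real, complex conjugation acts on each copy of $\overline{\rho}$ inside $A$ as a scalar $\varepsilon \in \{\pm 1\}$; but globally on $J_0(N)[2]$ (or on the $\mathfrak{a}$-part) the two scalars $+1$ and $-1$ must \emph{both} occur — this is forced because the full Tate module has balanced $c$-eigenspaces and this balance is inherited by $T_{\mathfrak{a}}/(2)$ together with the filtration whose graded pieces involve $A$. Hence the $\mu$ copies of $\overline{\rho}$ in $A$ split into $c = +1$ copies and $c = -1$ copies; more importantly, in the larger module $T_{\mathfrak{a}}/\mathfrak{m}$ (or appropriate sub/quotients) there are at least $2\mu$ Jordan–Hölder factors isomorphic to $\overline{\rho}$ — $\mu$ coming from $T^+$ and $\mu$ from $T^-$, each being faithful $\T_{\mathfrak{a}}$-modules so each forcing multiplicity $\geq \mu$ in the cotangent space. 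I expect the \textbf{main obstacle} to be the bookkeeping that converts "$c$ has balanced eigenspaces on the Tate module" into a clean lower bound on $\dim_{k_{\mathfrak{a}}}\T_{\mathfrak{a}}/(2)$: one must be careful that passing to the $\mathfrak{a}$-localization preserves the balance, and that the faithfulness of $T_{\mathfrak{a}}^{\pm}$ over $\T_{\mathfrak{a}}$ genuinely holds (which uses that $\T_{\mathfrak{a}}$ acts faithfully on $H_1$ and that the plus/minus quotients of modular symbols are each faithful Hecke modules — a theorem going back to the structure of modular symbols). Once that faithfulness is in hand, the factor of $2$, and the multiplicity factor $\mu$, fall out from comparing $\Z_2$-ranks and reducing mod $2$.
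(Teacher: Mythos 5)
Your proposal correctly identifies the shape of the argument --- put $A$ inside a ``half-sized'' piece of the mod-$2$ Tate module using complex conjugation, and compare with $\rank_{\Z_2}\T_{\mathfrak{a}}=g(\mathfrak{a})$ --- but the decisive step is missing. Since $\overline{\rho}$ is totally real, complex conjugation $\sigma$ acts trivially on $A$ (not ``as a scalar $\pm1$'': in characteristic $2$ these coincide, and total reality means $\sigma\in\ker\overline{\rho}$), so $A$ lands in the $\sigma$-fixed subspace of $J_0(N)[\mathfrak{a}^{\infty},2]$, i.e.\ in the real $2$-torsion of the $\mathfrak{a}$-part. What you need is the \emph{upper} bound $\dim_{\F_2}J_0(N)[\mathfrak{a}^{\infty},2](\R)\le g(\mathfrak{a})$; with it, $\dim_{\F_2}A=2\mu[k_{\mathfrak{a}}:\F_2]\le g(\mathfrak{a})$ gives exactly $\dim_{k_{\mathfrak{a}}}\T_{\mathfrak{a}}/(2)\ge 2\mu$. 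Your route to this bound --- ``the balance of the $\pm$-eigenspaces of $\sigma$ on the Tate module is inherited by $T_{\mathfrak{a}}/(2)$'' --- is precisely the assertion that needs proof, and it is not automatic: for a $\Z_2$-lattice $T$ with an involution, the fixed subspace of $T/2T$ can be strictly larger than the image of $T^{+}$ (its $\F_2$-dimension is anywhere between $\tfrac12\dim_{\F_2} T/2T$ and $\dim_{\F_2} T/2T$), so faithfulness of $T^{\pm}$ over $\T_{\mathfrak{a}}$ only yields $\dim_{\F_2}(T_{\mathfrak{a}}/2)^{\sigma=1}\ge g(\mathfrak{a})$, which is the wrong direction. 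Without the upper bound your argument only gives $\dim_{k_{\mathfrak{a}}}\T_{\mathfrak{a}}/(2)\ge\mu$, losing the factor of $2$.

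The paper closes this gap with a global input you do not invoke: Merel's theorem that $J_0(N)(\R)$ is connected for $N$ prime, so that $J_0(N)[2](\R)\cong(\Z/2\Z)^{g}$ with $g$ the genus. Combining this with Mazur's result that the $\mathfrak{a}$-adic Tate module is free of rank $2$ over $\T_{\mathfrak{a}}$ (so $\dim_{\F_2}J_0(N)[\mathfrak{a}^{\infty},2](\C)=2g(\mathfrak{a})$) and the $(\sigma-1)$ trick (which gives the lower bound $\dim_{\F_2}J_0(N)[\mathfrak{a}^{\infty},2](\R)\ge g(\mathfrak{a})$ for every $\mathfrak{a}$), one sums over all $\mathfrak{a}$: the total is at least $\sum_{\mathfrak{a}} g(\mathfrak{a})=g$ and, by Merel, equal to $g$, forcing the equality $\dim_{\F_2}J_0(N)[\mathfrak{a}^{\infty},2](\R)=g(\mathfrak{a})$ for each $\mathfrak{a}$. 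That equality is exactly the upper bound your proposal lacks. If you add Merel's connectedness theorem and run this counting argument, your proof becomes essentially the paper's, and the $\pm$-eigenspace/faithfulness machinery can be dropped entirely.
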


\begin{proof}Since $\overline{\rho}$ is a representation of the Galois group of a totally real field, we know that the points of $A$ are all real.  Since $A$ also has a $\T_{\mathfrak{a}}$-action with annihilator $\mathfrak{a}$, $A$ is a $k_{\mathfrak{a}}$-vector space, whose dimension is twice the multiplicity of $\overline{\rho}$.  We prove the inequality below, from which the proposition follows quickly.

\begin{lem}\label{ineqlem}If $W$ denotes the Witt vector functor, then \[\dim_{k_{\mathfrak{a}}}(A)\leq \rank_{W(k_{\mathfrak{a}})}(\T_{\mathfrak{a}}).\]\end{lem}
\begin{proof}We follow \cite[Section 3.2]{MR2460912}.  A proposition of Merel states that the real variety $J_0(N)(\mathbb{R})$ is connected if $N$ is prime \cite[Proposition 5]{MR1405312}.  If $g$ is the genus of $X_0(N)$, then we know that $J_0(N)(\mathbb{C})=(\R/\Z)^{2g}$, and therefore $J_0(N)(\mathbb{R})=(\R/\Z)^g$.  And we also know that \[J_0(N)[2](\R)=(\Z/2\Z)^g.\]  Additionally, as we know that $\T_2=\bigoplus_{\mathfrak{a}}\T_{\mathfrak{a}}$, and all $\T_{\mathfrak{a}}$ are free $\Z_2$-modules, say of rank $g(\mathfrak{a})$, we know that \[\sum_{\mathfrak{a}}g(\mathfrak{a})=\rank_{\Z_2}(\T_2)=g.\]A lemma of Mazur shows that the $\mathfrak{a}$-adic Tate module, $\displaystyle\lim_{\longleftarrow}J_0(N)[\mathfrak{a}^i]$, is a $\T_{\mathfrak{a}}$-module of rank $2$ \cite[Lemma 7.7]{MR488287}, and therefore a free $\Z_2$-module of rank $2g(a)$, so $J_0(N)[\mathfrak{a}^{\infty}](\C)=(\Q_2/\Z_2)^{2g(\mathfrak{a})}$.  We therefore know that the $2$-torsion points of this scheme are\[J_0(N)[\mathfrak{a}^{\infty},2](\C)=(\Z/2\Z)^{2g(\mathfrak{a})}.\]

If $\sigma$ acting on $J_0(N)(\C)$ denotes complex conjugation, then $(\sigma-1)^2=2-2\sigma$ kills all $2$-torsion, and $\sigma-1$ itself kills all real points.  So within the scheme $J_0(N)[\mathfrak{a}^{\infty},2](\C)$, applying $\sigma-1$ once kills all real points and maps all points to real points, and so \[\dim_{\Z/2\Z}J_0(N)[\mathfrak{a}^{\infty},2](\R)\geq\frac{1}{2}\dim_{\Z/2\Z}J_0(N)[\mathfrak{a}^{\infty},2](\C)=g(\mathfrak{a}).\]But $J_0(N)[2](\R)$ breaks up into its $\mathfrak{a}^{\infty}$ pieces, $J_0(N)[2](\R)=\bigoplus_{\mathfrak{a}}J_0(N)[\mathfrak{a}^{\infty},2](\R)$.  Taking dimensions on both sides gives \[g=\sum_{\mathfrak{a}}\dim_{\Z/2\Z}J_0(N)[\mathfrak{a}^{\infty},2](\R)\geq\sum_{\mathfrak{a}}g(\mathfrak{a})=g,\] so equality must hold everywhere.

Since all points of $A=J_0(N)[\mathfrak{a}]$ are real, we find that \[\dim_{\Z/2\Z}A\leq\dim_{\Z/2\Z}J_0(N)[\mathfrak{a}^{\infty},2](\R)=g(\mathfrak{a})=\rank_{\Z_2}(\T_{\mathfrak{a}}).\]Dividing both sides by $[k_{\mathfrak{a}}:\Z/2\Z]=\text{rank}(W(k_{\mathfrak{a}})/\Z_2)$, we have the result.\end{proof}

Returning to the proof of Proposition \ref{prop2}, we therefore know that \[\dimn_{k_{\mathfrak{a}}}\T_{\mathfrak{a}}/(2)=\dimn_{W(k_{\mathfrak{a}})}\T_{\mathfrak{a}}\geq2\cdot\text{multiplicity of }\overline{\rho}.\]
\end{proof}

For reference, we recall a theorem of Deligne that describes the characteristic $0$ representation $\rho$ restricted to the decomposition group at $2$:
\begin{thm}[{\cite[Theorem~2.5]{MR1176206}}]\label{deligne}If $\rho_f$ is an ordinary $2$-adic representation corresponding to a weight $2$ level $\Gamma_0(N)$ form $f$, then $\rho_f|_{D_2}$, the restriction of $\rho_f$ to the decomposition group at a prime above $2$, is of the shape\[\rho|_{D_2}\sim\begin{pmatrix}\chi\lambda^{-1}&*\\ 0&\lambda\end{pmatrix}\] for $\lambda$ the unramified character $G_{\Q_2}\rightarrow\Z_2^{\times}$ taking $\Frob_2$ to the unit root of $X^2-a_2X+2$, and $\chi$ is the $2$-adic cyclotomic character.\end{thm}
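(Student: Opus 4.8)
The plan is to deduce the local shape of $\rho_f$ at $2$ from the good reduction of $J_0(N)$ there, together with the connected--\'etale sequence of the associated $2$-divisible group and the Eichler--Shimura congruence relation. First I would realize $\rho_f$ geometrically: after extending scalars to $\overline{\Q}_2$, the representation $\rho_f$ (and its Galois conjugates) appears inside the $2$-adic Tate module of the abelian variety $A_f$ attached to $f$, an isogeny factor of $J_0(N)$; it is two-dimensional with $\det\rho_f=\chi$ the $2$-adic cyclotomic character, since $f$ has weight $2$ and trivial nebentypus. Because $2\nmid N$, the curve $X_0(N)$, hence $J_0(N)$ and $A_f$, have good reduction at $2$; write $\widetilde{A}_f$ for the reduction over $\F_2$. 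By the Eichler--Shimura congruence relation the Frobenius endomorphism of $\widetilde{A}_f$ satisfies a characteristic polynomial that is a power of $X^2-a_2X+2$, and $\rho_f|_{D_2}$ is computed by the $2$-divisible group $\mathcal{G}=A_f[2^\infty]$ over $\Z_2$.

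Next I would exploit the connected--\'etale exact sequence $0\to\mathcal{G}^0\to\mathcal{G}\to\mathcal{G}^{\mathrm{et}}\to 0$ of $\mathcal{G}$ over $\Z_2$. The two roots of $X^2-a_2X+2$ multiply to $2$ and sum to $a_2$, which is a unit by ordinarity, so exactly one root is a $2$-adic unit; hence $\widetilde{A}_f$ is ordinary, $\mathcal{G}^0$ is of multiplicative type, and each of $\mathcal{G}^{\mathrm{et}}$ and $\mathcal{G}^0$ has height $\dim A_f$. Passing to generic fibers and then to the $\rho_f$-isotypic piece gives a $D_2$-stable exact sequence $0\to V^0\to V\to V^{\mathrm{et}}\to 0$ of two-dimensional $\overline{\Q}_2$-representations in which $V^{\mathrm{et}}$ is one-dimensional, $D_2$ acts on it through an \emph{unramified} character, and $\Frob_2$ acts on it by the unit root of $X^2-a_2X+2$; this character is precisely the $\lambda$ of the statement. (Dually, $I_2$ acts on $V^0$ through $\chi$ since $\mathcal{G}^0$ is Cartier dual to an \'etale group, but this will also fall out of the determinant.) Writing $\rho_f|_{D_2}\sim\begin{pmatrix}\psi&*\\0&\lambda\end{pmatrix}$ with $\lambda$ unramified, the relation $\det\bigl(\rho_f|_{D_2}\bigr)=\chi$ forces $\psi=\chi\lambda^{-1}$, giving the asserted shape $\begin{pmatrix}\chi\lambda^{-1}&*\\0&\lambda\end{pmatrix}$.

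The main obstacle is the middle step: showing that on the $\rho_f$-isotypic part the connected--\'etale sequence of $\mathcal{G}$ is genuinely nondegenerate with one-dimensional graded pieces — equivalently, that $\widetilde{A}_f$ is ordinary and $\rho_f|_{D_2}$ is reducible rather than (potentially) irreducible. This is the only place the hypothesis that $a_2$ is a unit enters, and it does so through the Eichler--Shimura congruence relation, which both identifies the Frobenius eigenvalue on the \'etale quotient with the unit root of $X^2-a_2X+2$ and separates it from the non-unit root; without ordinarity the reduction could be supersingular and $\rho_f|_{D_2}$ locally irreducible.
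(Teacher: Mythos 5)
The paper offers no proof of this statement: it is quoted as a known theorem of Deligne (in the form recorded by Edixhoven) and used as a black box, so there is nothing internal to compare against. Your argument is the standard proof of the quoted result, and its skeleton is correct: good reduction of $X_0(N)$, hence of $A_f$, at $2$; the connected--\'etale sequence of the associated $2$-divisible group; the Eichler--Shimura relation $F^2-T_2F+2=0$ to identify the Frobenius eigenvalue on the unramified \'etale quotient with the unit root of $X^2-a_2X+2$; and $\det\rho_f=\chi$ to pin down the sub-character as $\chi\lambda^{-1}$. Two points need tightening before this is a complete proof. First, the characteristic polynomial of Frobenius on $\widetilde{A}_f$ is $\prod_{\sigma}\bigl(X^2-\sigma(a_2)X+2\bigr)$, the product over embeddings $\sigma$ of the Hecke field $E_f$, not a power of $X^2-a_2X+2$; what you actually use is that $F$ satisfies $F^2-a_2F+2=0$ as an $E_f$-linear endomorphism. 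Second, ordinarity of $f$ in the paper's sense says only that $a_2$ is a unit at the one prime $\mathfrak{p}$ of $E_f$ above $2$ that determines $\rho_f$; this does not imply that $\widetilde{A}_f$ is an ordinary abelian variety, since $a_2$ could be a non-unit at another prime of $E_f$ over $2$, so your claim that $\mathcal{G}^{\mathrm{et}}$ and $\mathcal{G}^0$ each have height $\dim A_f$ is not justified. The repair is routine: run the connected--\'etale argument directly on the $\mathfrak{p}$-divisible group $A_f[\mathfrak{p}^\infty]$, which has height $2$ over $\mathcal{O}_{E,\mathfrak{p}}$, where the Newton polygon of $X^2-a_2X+2$ over $E_\mathfrak{p}$ has exactly one zero slope and one positive slope, forcing \'etale and multiplicative parts each of $\mathcal{O}_{E,\mathfrak{p}}$-height one. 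With that adjustment (and attention to the arithmetic-versus-geometric Frobenius convention on the \'etale quotient), your proof is complete and is the expected one.
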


 \section{$N\equiv 1\bmod 8$}\label{sec2}
\subsection{$K=\Q(\sqrt{N})$}\label{sec21}
\begin{thm}If $N\equiv1\bmod 8$, and $\mathfrak{m}$ is a maximal ideal of $\T_2^{\textup{an}}(N)$ that is $\Q(\sqrt{N})$-dihedral, then $\dim S_2(N)_{\mathfrak{m}}\geq 4$.\end{thm}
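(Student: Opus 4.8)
The plan is to use Proposition~\ref{prop1}, which reduces the claim to showing that $\sum_{\mathfrak{m}\subseteq\mathfrak{a}}[k_{\mathfrak{a}}:\F_2]\dim_{k_{\mathfrak{a}}}\T_{\mathfrak{a}}/(2)\geq 4$. Since $\mathfrak{m}$ is $\Q(\sqrt{N})$-dihedral with $N>0$, the associated $\overline{\rho}$ is totally real (its projective image is dihedral cut out by a subfield of the real field $\Q(\sqrt{N},\dots)$; more precisely, complex conjugation lands in the image and has order dividing $2$, but one checks it acts trivially because $\Q(\sqrt{N})$ is real, so the whole fixed field is totally real). Granting this, Proposition~\ref{prop2} gives $\dim_{k_{\mathfrak{a}}}\T_{\mathfrak{a}}/(2)\geq 2\cdot(\text{mult. of }\overline{\rho}\text{ in }J_0(N)[\mathfrak{a}])\geq 2$ for \emph{every} maximal $\mathfrak{a}\supseteq\mathfrak{m}$. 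So if there are at least two such $\mathfrak{a}$, or if any single $\mathfrak{a}$ has residue degree $\geq 2$, or if $\overline{\rho}$ appears with multiplicity $\geq 2$ in some $J_0(N)[\mathfrak{a}]$, we are immediately done. The remaining task is to rule out the sole bad scenario: a unique $\mathfrak{a}$, with $k_{\mathfrak{a}}=\F_2$, contributing exactly $\dim_{\F_2}\T_{\mathfrak{a}}/(2)=2$, i.e.\ $\T_{\mathfrak{a}}$ free of rank $2$ over $\Z_2$ with $\overline{\rho}$ appearing with multiplicity exactly $1$.

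To eliminate this, I would produce \emph{two} distinct maximal ideals of $\T_2$ above $\mathfrak{m}$ — equivalently, two non-conjugate characteristic-zero eigenforms (or two distinct $\GL_2(\overline{\Q}_2)$-lifts with the right Hecke eigenvalues away from $2N$) whose mod-$2$ reduction is $\overline{\rho}$, which split into different blocks because their $U_N$-eigenvalues (or $T_2$-eigenvalues) differ $2$-adically beyond what the anemic algebra sees. Concretely: since $\mathfrak{m}$ is $\Q(\sqrt N)$-dihedral, $\overline{\rho}=\Ind_{G_K}^{G_\Q}\overline{\psi}$ for a character $\overline{\psi}$ of $\Cl(K)$ with $K=\Q(\sqrt N)$. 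Because $N\equiv 1\bmod 8$, the prime $2$ splits in $K$, so $K$ is unramified at $2$ and $\overline\rho$ is unramified at $2$; ordinarity is automatic and the $a(2)$-ambiguity of Theorem~\ref{deligne} is genuine. The idea is that the theta series attached to $\psi$ and the two choices of prime above $2$ in $K$ (or $\psi$ vs.\ a twist) give eigenforms with genuinely different $a(2)$, both reducing to $\overline\rho$; together with the fact (to be established, as in Lemma~5.1 referenced in the introduction) that $U_N$ lies in $\T_2^{\mathrm{an}}$, one shows these lie over distinct maximal ideals of $\T_2$, forcing the sum in Proposition~\ref{prop1} to have at least two terms, each $\geq 2$.

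Alternatively — and this may be the cleaner route — one stays entirely on the mod-$2$ / Hecke side: show directly that $\dim_{\F_2}\T_{\mathfrak{a}}/(2)\geq 4$ by exhibiting $4$ independent $\F_2$-valued functionals, e.g.\ by computing the space $S_2(N)_{\mathfrak m}$ of mod-$2$ forms with nilpotent $\mathfrak m$-action using the explicit description of dihedral forms as reductions of theta series and the action of $\langle U_N, T_2\rangle$ on them, and matching this against the genus-theory count that Kedlaya–Medvedovsky use to see how many classical forms feed into $\overline\rho$. The main obstacle, in either approach, is the borderline case itself: proving that multiplicity one plus a single $\F_2$-block cannot occur requires an \emph{honest} source of extra congruences, and the natural one is the deformation-theoretic/genus-theoretic input — one must show the class group $\Cl(K)$, or the relevant ray class group at $2$, has enough $2$-torsion (using $N\equiv 1\bmod 8$, Gauss genus theory, and the splitting of $2$) to guarantee at least two independent dihedral eigenforms over $\overline\Z_2$ reducing to $\overline\rho$. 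Establishing that counting lower bound, and checking it is not accidentally collapsed by $\T_2^{\mathrm{an}}$ versus $\T_2$, is where the real work lies; the rest is the formal machinery of Propositions~\ref{prop1} and~\ref{prop2}.
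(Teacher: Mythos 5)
Your reduction is the same as the paper's: Propositions~\ref{prop1} and~\ref{prop2} give a contribution of at least $2$ from each maximal ideal $\mathfrak{a}$ of $\T_2$ above $\mathfrak{m}$, and the whole problem is to double that bound. You also correctly locate the source of the doubling in the first two (generic) situations: since $2$ splits in $K=\Q(\sqrt N)$, the representation is unramified and semisimple at $2$, so by Theorem~\ref{deligne} the image of $T_2$ in $k_{\mathfrak{a}}$ must be a root of $P(x)=\det(x\Id-\overline{\rho}(\Frob_2))$; if $P$ is irreducible over $k=\T^{\mathrm{an}}/\mathfrak{m}$ you get $[k_{\mathfrak{a}}:k]\geq 2$, and if $P$ has two distinct roots in $k$ you get two maximal ideals $(\mathfrak{m},T_2-x_1)$ and $(\mathfrak{m},T_2-x_2)$. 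That is exactly your ``two choices of prime above $2$'' picture, and it is the paper's argument in those cases.

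The genuine gap is the borderline case you flag at the end, where $P$ has a double root, and the tool you reach for there is the wrong one. When the two roots coincide, semisimplicity together with $\det\overline{\rho}|_{D_2}=1$ forces $\overline{\rho}(\Frob_2)$ to be trivial, and the paper closes this case not by manufacturing a second eigenform or a second maximal ideal, but by showing that the \emph{multiplicity} hypothesis in your ``bad scenario'' fails: since dihedral representations arise from Katz weight $1$ forms (Wiese, \cite{MR2054983}), and $\overline{\rho}|_{D_2}$ is trivial, the multiplicity of $\overline{\rho}$ in $J_0(N)[\mathfrak{a}]$ is $2$ (\cite[Corollary 4.5]{MR2336635}), whence $\dim_{k_{\mathfrak{a}}}\T_{\mathfrak{a}}/(2)\geq 4$ from Proposition~\ref{prop2} alone. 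Your proposed substitute --- genus theory of $\Cl(K)$ or a ray class group to produce two independent dihedral lifts --- is the mechanism the paper uses for the \emph{imaginary} quadratic cases ($K=\Q(\sqrt{-N})$, Sections~\ref{sec22} and~\ref{sec32}); it does not obviously produce anything here, since two theta lifts of the same character over the real field would still sit over the same maximal ideal when $a(2)$ is forced. Without the weight-$1$/multiplicity-two input, the double-root case remains open in your argument.
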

\begin{proof}
Let $K=\Q(\sqrt{N})$ and denote the fixed field of the kernel of $\overline{\rho}$ as $L$.  In this $K$, the prime $(2)$ factors as $\mathfrak{p}\mathfrak{q}$ for distinct $\mathfrak{p}$ and $\mathfrak{q}$, and $\overline{\rho}$ must be unramified at $2$ so $\Frob_2$, as a conjugacy class containing $\Frob_{\mathfrak{p}}$ and $\Frob_{\mathfrak{q}}$, must lie in $\Gal(L/K)$.  Moreover, $\overline{\rho}$ must be semisimple at $2$, because if $\overline{\rho}=\Ind_K^{\Q}\overline{\chi}$ for $\overline{\chi}$ a character of the unramified extension $\Gal(L/K)$, then $\overline{\rho}|_{\Gal(L/K)}=\overline{\chi}\oplus\overline{\chi}^g$ where $g\in\Gal(L/\Q)\backslash\Gal(L/K)$.

Theorem~\ref{deligne} and this semisimplicity statement tell us that the decomposition group at $2$ in the ${}\bmod 2$ representation looks like $\begin{pmatrix}\lambda^{-1}&0\\0&\lambda\end{pmatrix}$, because the cyclotomic character is always $1\bmod 2$.  So we find that the polynomial $\det(x\Id_2-\overline{\rho})$ has coefficients that are unramified at $2$, and $T_2$ is a root of $P(x):=\det(x\Id_2-\overline{\rho}(\Frob_2))$.  There are thus three cases: either $P$ has no roots already in $k:=\T^{\text{an}}/\mathfrak{m}$, or it has distinct roots lying in $k$, or it has a repeated root.

If $P$ has no roots in $k$, then $[k_{\mathfrak{a}}:k]\geq 2$ for $\mathfrak{a}$ the extension of $\mathfrak{m}$, so Propositions~\ref{prop1} and \ref{prop2} say that the dimension of the space is at least \[ [k_{\mathfrak{a}}:\F_2]\dimn_{k_{\mathfrak{a}}}\T_{\mathfrak{a}}/(2)\geq[k_{\mathfrak{a}}:k]\dimn_{k_{\mathfrak{a}}}\T_{\mathfrak{a}}/(2)\geq 2\cdot 2=4.\]

If $P$ has distinct roots in $k$, then there are at least $2$ extensions of $\mathfrak{m}$ to $\T_2$.  Namely, if $x_1$ and $x_2$ are lifts of the roots of $P$ to $\T^{\text{an}}_\mathfrak{m}$, the two ideals $\mathfrak{a}_1=(\mathfrak{m}, T_2-x_1)$ and $\mathfrak{a}_2=(\mathfrak{m}, T_2-x_2)$ are two maximal ideals.  So in this case the dimension is at least\[[k_{\mathfrak{a}_1}:\F_2]\dimn_{k_{\mathfrak{a}_1}}\T_{\mathfrak{a}_1}/(2)+[k_{\mathfrak{a}_2}:\F_2]\dimn_{k_{\mathfrak{a}_2}}\T_{\mathfrak{a}_2}/(2)\geq\dimn_{k_{\mathfrak{a}_1}}\T_{\mathfrak{a}_1}/(2)+\dimn_{k_{\mathfrak{a}_2}}\T_{\mathfrak{a}_2}/(2)\geq 2+2=4.\]

Finally, suppose $P$ has a double root.  There is at least one maximal ideal $\mathfrak{a}$ of $\T_2$ above $\mathfrak{m}$.  Because we know that $\overline{\rho}|_{D_2}$ is semisimple with determinant $1$, the double root must be $1$ and $\overline{\rho}|_{D_2}$ is trivial.  Then Wiese proves that since all dihedral representations arise from Katz weight 1 modular forms (as Wiese proves in \cite{MR2054983}),  the multiplicity of $\overline{\rho}$ in $A$ is $2$ \cite[Corollary 4.5]{MR2336635}.  In this case the dimension is at least \[[k_{\mathfrak{a}}:\F_2]\dimn_{k_{\mathfrak{a}}}\T_{\mathfrak{a}}/(2)\geq\dimn_{k_{\mathfrak{a}_1}}\T_{\mathfrak{a}_1}/(2)\geq 2\cdot\text{multiplicity of }\overline{\rho}\geq4.\]
\end{proof}
\subsection{$K=\Q(\sqrt{-N})$}\label{sec22}
\begin{thm}\label{thm22}If $N\equiv1\bmod 8$, and $\mathfrak{m}$ is a maximal ideal of $\T_2^{\textup{an}}(N)$ that is $\Q(\sqrt{-N})$-dihedral, then $\dim S_2(N)_{\mathfrak{m}}\geq 2^e$ where $2^e=\big|\Cl(K)[2^{\infty}]\big|$.\end{thm}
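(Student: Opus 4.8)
The strategy is to use Proposition~\ref{prop1} to reduce the claim to the $\Z_2$-rank of a single local Hecke algebra, and then to locate a copy of the group ring of $\Cl(K)[2^\infty]$ inside it; the size $2^e$ will come from twisting by $2$-power ideal-class characters of $K$.

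First I would pin down the local picture. Since $N\equiv 1\bmod 8$, the prime $2$ is ramified in $K=\Q(\sqrt{-N})$ with completion $K_{\mathfrak{P}}=\Q_2(\sqrt{-1})$; because $\overline{\chi}$ has odd order it is trivial on the (wild) inertia at $\mathfrak{P}$ and on the $2$-torsion class $[\mathfrak{P}]$, so $\overline{\rho}|_{D_2}=\Ind_{D_{\mathfrak{P}}}^{D_2}(\mathbf{1})$ is a non-split unipotent extension with trivial semisimplification, which is moreover peu ramifiée (its class in $\Q_2^\times/(\Q_2^\times)^2$ is the unit class $\langle -1\rangle$, so it is finite flat at $2$). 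By Theorem~\ref{deligne} this forces $a_2\equiv 1\bmod 2$, i.e.\ $T_2\equiv 1\pmod{\mathfrak{a}}$ for every maximal ideal $\mathfrak{a}$ of $\T_2$ over $\mathfrak{m}$; and since $N$ is prime and the weight is $2$ one has $U_N^2=1$, so $U_N\equiv 1\pmod{\mathfrak{a}}$ as well. Hence there is a unique such $\mathfrak{a}$, with $k_{\mathfrak{a}}=k$, and Proposition~\ref{prop1} gives $\dim S_2(N)_{\mathfrak{m}}=\rank_{\Z_2}\T_{\mathfrak{a}}$. It therefore suffices to prove $\rank_{\Z_2}\T_{\mathfrak{a}}\ge 2^e$.

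None of the relevant eigenforms is CM — because $|D_K|=4N$ does not divide $N$, no weight-$2$ form of level dividing $N$ has CM by $K$ — so I would proceed by deformation theory. By a suitable modularity lifting theorem, $\T_{\mathfrak{a}}$ is the universal deformation ring of $\overline{\rho}$ with cyclotomic determinant, unramified outside $2N$, ordinary and peu ramifiée (finite flat) at $2$, and Steinberg at $N$. Inside this I would consider the closed subfunctor of deformations $\rho$ for which $\rho|_{G_K}$ is reducible — an extension of a character of $G_K$ by another, the two interchanged by $\Gal(K/\Q)$ — with universal quotient $\T_{\mathfrak{a}}\twoheadrightarrow R^{-}$. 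Since $\overline{\chi}$ has odd order, such deformations are governed by lifts of $\overline{\chi}$ along the pro-$2$ abelianization of $G_K$ unramified outside $\{2,N\}$; the fixed-determinant condition removes the cyclotomic line, and the ordinary-at-$2$ and Steinberg-at-$N$ conditions — the latter being possible here precisely because the extension is non-split, whereas a split (dihedral/CM) deformation could never be Steinberg at a prime ramifying in $K$ — pin down the remaining $\Z_2$-directions (cyclotomic and anticyclotomic), leaving exactly the finite group $\Cl(K)[2^\infty]\cong\Z/2^e$. Thus $R^{-}$ receives a ring map from $W(k)[\Cl(K)[2^\infty]]$, so $\rank_{\Z_2}R^{-}\ge 2^e$ and hence $\rank_{\Z_2}\T_{\mathfrak{a}}\ge\rank_{\Z_2}R^{-}\ge 2^e$. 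Concretely, the corresponding eigenforms are the weight-$2$ specializations of the $2^e$ CM Hida families through $\overline{\rho}$: fix a Hecke character of $K$ of infinity type $(1,0)$ whose conductor is a power of $\mathfrak{P}$ — hence of prime-to-$2$ (``tame'') level $N$ — and twist it by the $2^e$ characters of $\Cl(K)[2^\infty]$, all of which are unramified and therefore invisible modulo $2$.

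The hard part is this last step: justifying that the ``reducible over $K$'' locus of the deformation ring is exactly $W(k)[\Cl(K)[2^\infty]]$ — in particular that the Steinberg-at-$N$ and ordinary-at-$2$ conditions cut the pro-$2$ abelian Galois group of $K$ down to $\Cl(K)[2^\infty]$ and \emph{no further}, which is what upgrades the easy bound ``$\ge 2$'' to ``$\ge 2^e$'' — and that all of these reducible-over-$K$ deformations are genuinely modular of level $N$. On the automorphic side this amounts to controlling the $2$-part of the Eisenstein part of the relevant (Hida-theoretic, or Bianchi) Hecke algebra and transporting the count back to classical forms of level $\Gamma_0(N)$; the residually dihedral, ordinary, $p=2$ setting is exactly where the modularity-lifting and control inputs need the most care.
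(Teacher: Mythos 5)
Your reduction via Proposition~\ref{prop1} to bounding $\rank_{\Z_2}\T_{\mathfrak{a}}$ is fine, and your local analysis at $2$ (the induction from $D_{\mathfrak{P}}$ is a non-split unipotent, peu ramifi\'ee extension, forcing $T_2\equiv 1$) agrees with what the paper establishes. But the core of your argument has a genuine gap, and it sits exactly where you say ``the hard part'' is. To conclude $\rank_{\Z_2}\T_{\mathfrak{a}}\geq\rank_{\Z_2}R^{-}$ you need a surjection $\T_{\mathfrak{a}}\twoheadrightarrow R^{-}$, i.e.\ you need either a full $R=\T$ theorem or a direct proof that every reducible-over-$K$ deformation in $R^{-}$ is modular \emph{of level $N$}; the easy surjection $R\twoheadrightarrow\T_{\mathfrak{a}}$ coming from traces gives the inequality in the wrong direction. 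An $R=\T$ theorem here would be at $p=2$ for a residually dihedral $\overline{\rho}$, which is precisely the case where the Taylor--Wiles method breaks down: $\overline{\rho}|_{G_{\Q(\zeta_2)}}=\overline{\rho}$ is induced from $K$, so there are no Taylor--Wiles primes and the standard ``bigness'' hypotheses fail. There is no off-the-shelf modularity lifting theorem you can cite, and you do not supply one. Likewise, your claim that the ordinary and Steinberg conditions cut the pro-$2$ ray class group of $K$ down to exactly $\Cl(K)[2^{\infty}]$ is asserted, not proved. Your concrete fallback --- CM Hida families attached to Hecke characters of conductor a power of $\mathfrak{P}$ --- produces classical weight-$2$ forms of level $4N\cdot 2^{s}$ (since $|D_K|=4N$), not level $N$; descending these mod $2$ to level exactly $N$ is a nontrivial level-lowering at $2$ which you do not carry out. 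That descent is not a technicality: it is the entire content of the theorem beyond the trivial bound $\dim S_2(N)_{\mathfrak{m}}\geq 1$.

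For comparison, the paper avoids deformation theory entirely and argues constructively. It extends $\overline{\chi}$ by the cyclic unramified $2$-extension of $K$ to a character valued in $\overline{\F}_2[x]/(x^{2^e}-1)$, realizes the induced representation by an actual weight-$1$ eigenform of level $4N$ with coefficients in $\Z_2^{\mathrm{ur}}[x]/(x^{2^e}-1)$ (via Weil--Langlands applied to each cyclotomic component), multiplies by an explicit theta series congruent to $1\bmod 2$ to reach weight $2$, and then lowers the level from $4N$ to $N$ using $U_2$ together with the Calegari--Emerton level-lowering theorem (checking finite flatness by exhibiting the group scheme explicitly). The output is a surjection $\T_{\mathfrak{m}}\otimes\overline{\F}_2\twoheadrightarrow\overline{\F}_2[x]/(x^{2^e}-1)$, which gives the bound $2^e$ directly. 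If you want to pursue your route, the honest way to close the gap is to replace the appeal to $R=\T$ by exactly this kind of explicit construction of the $2^e$ congruent eigenforms at level $N$.
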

\begin{proof}
We first recall a well-known proposition of genus theory:

\begin{prop}\label{genusthy}Let $K=\Q(\sqrt{-d})$ be an imaginary quadratic field with $d>0$ squarefree.

\begin{enumerate}[label=(\alph*), font=\upshape]
\item The $\F_2$-dimension of the $2$-torsion of the class group of $K$ is one less than the number of primes dividing the discriminant $\Delta_{K/\Q}$.
\item If $d\equiv5\bmod 8$ is a prime, then the $2$-part of the class group of $K$ is cyclic of order $2$.
\item If $d\equiv1\bmod 8$ is a prime, then the $2$-part of the class group of $K$ is cyclic of order at least $4$.\end{enumerate}\end{prop}
A proof of the final two parts can be found as \cite[Proposition 4.1]{MR2129709}.

We return to the case $N\equiv 1\bmod 8$.  Proposition~\ref{genusthy} tells us that the $2$-part of the class group is cyclic so there is an unramified $\Z/(2^e)$-extension $L'/K$, say $\Gal(L'/K)=\langle g\rangle$ with $g^{2^e}=\Id$.  If we as before denote by $L$ the fixed field of the kernel of $\overline{\rho}$, and we let $M=L\cdot L'$, the character $\overline{\chi}$ of $\Gal(L/K)$ whose induction equals $\overline{\rho}$ can be extended to a character $\overline{\chi}':\Gal(M/K)\rightarrow \overline{\F}_2[x]/(x^{2^e}-1)$ given by mapping $g$ to $x$.  This can be done because $L\cap L'=K$, because $[L:K]$ is odd, say $[L:K]=n$, and $[L':K]$ is a power of $2$.  Then the induction of $\overline{\chi}$ to $\overline{\rho}$ also extends from $\overline{\chi}'$ to $\overline{\rho}':\Gal(M/\Q)\rightarrow\GL_2(\overline{\F}_2[x]/(x^{2^e}-1))$.  We will prove this representation is modular by describing a $q$-expansion with coefficients in $\overline{\Z}_2[x]/(x^{2^e}-1)$ whose reduction ${}\bmod 2$ gives the desired Frobenius traces as coefficients, and proving that the expansion is modular via the embeddings of this coefficient ring into $\C$.  Then by the $q$-expansion principle we will have the result.

Let us suppose we have chosen a $2^e$th root of unity $\eta:=\zeta_{2^e}$ inside $\overline{\Z}_2$.  We may lift $\overline{\chi}$ to a character $\chi: \Gal(L/K)\rightarrow \Z_2^{\text{ur}}$.  We may therefore also lift $\overline{\chi}'$ to a character $\chi':\Gal(M/K)\rightarrow\Z_2^{\text{ur}}[x]/(x^{2^e}-1)$.  We may tensor with $\Q_2$, and identifying $\Q_2^{\text{ur}}[x]/(x^{2^e}-1)$ with $\bigoplus_{i=0}^e\Q_2^{\text{ur}}(\zeta_{2^i})$ by sending $x$ to $\eta^{2^{e-i}}$ gives us $e+1$ representations \[\chi_i: \Gal(M/K)\rightarrow\Q_2^{\text{ur}}(\zeta_{2^i})^{\times}\text{ and }\rho_i: \Gal(M/\Q)\rightarrow\GL_2(\Q_2^{\text{ur}}(\zeta_{2^i})).\]These are all finite image odd representations whose coefficients are algebraic and therefore may be compatibly embedded in $\C$.  All twists of $\rho_i$ are dihedral or nontrivial cyclic, and therefore all have analytic $L$-functions.  So by the converse theorem of Weil and Langlands (see \cite[Theorem~1]{MR0450201}, for instance), each $\rho_i$ corresponds to a weight $1$ eigenform $f_i$ with modulus equal to the conductor of the representation and nebentypus equal to its determinant.  Here, the conductor is $4N$ and the nebentypus is the nontrivial character of $\Gal(K/\Q)$.  This nebentypus, because $K$ has discriminant $4N$, is the character $\lambda_{4N}:=\lambda_4\lambda_N$ where $\lambda_4$ and $\lambda_N$ are the nontrivial order $2$ characters of $(\Z/4\Z)^{\times}$ and $(\Z/N\Z)^{\times}$; $\lambda_{4N}(p)=1$ if and only if $\Frob_p$ is the identity in $\Gal(K/\Q)$ if and only if $p$ splits in $K$.

Each $f_i$ is a simultaneous eigenvector for the entirety of the weight $1$ Hecke algebra $\T(4N)$, with coefficients in $\Q_2^{\text{ur}}(\zeta_{2^i})$, so by returning to $\Q_2^{\text{ur}}[x]/(x^{2^e}-1)$ we obtain a weight $1$ form $f$ with coefficients in this ring, which is therefore an eigenform by multiplicity $1$ results.  We can easily check that the traces of the representation $\rho'=\Ind_K^{\Q}\chi': \Gal(M/\Q)\rightarrow\GL_2(\Q_2^{\text{ur}}[x]/(x^{2^e}-1))$ correspond to the coefficients of $f$, and so since $\chi'$ and therefore $\rho$ are defined over $\Z_2^{\text{ur}}[x]/(x^{2^e}-1)$, $f$ also has coefficients in $\Z_2^{\text{ur}}[x]/(x^{2^e}-1)$.

Now we take the characteristic $0$ form $f$ and multiply by a modular form of weight $1$, level $\Gamma_1(4N)$ and nebentypus $\chi_{4N}$ whose $q$-expansion is congruent to $1\bmod2$.  That will give us a weight $2$ level $\Gamma_0(4N)$ form whose ${} \bmod 2$ reduction is equal to the $q$-expansion of a form coming from $\overline{\rho}'$.  We find such a form:

\begin{lem}The $q$-expansion $\sum_{m, n\in\Z}q^{m^2+Nn^2}$ describes a modular form $g$ in $S_1(4N,\Z_2,\chi_{4N})$.\end{lem}
\begin{proof}This follows from properties of the Jacobi theta function $\vartheta(\tau)=\displaystyle\sum_{k\in\mathbb{Z}}q^{k^2}$, but we give a different proof.  Let $\delta$ range over all characters of the class group $H$ of $K$, or equivalently over all unramified characters of $\Gal(\overline{\Q}/K)$.  By Weil-Langlands, $\Ind_K^{\Q}\delta$ as a representation of $G_{\Q}$ gives us a weight $1$ modular form.  The determinant of this induction is always equal to $\chi_{K/\Q}$, and the conductor is always equal to $4N$.  For two of the characters, $\delta$ trivial and $\delta$ the nontrivial character of $\Gal(K(i)/K)$, $\Ind_K^{\Q}\delta$ is reducible and the weight $1$ modular forms are the Eisenstein series\[E^{\chi_{4N},_{\mathbf{1}}}(q)=L(\chi_{4N}, 0)/2+\sum_{m=1}^{\infty}q^m\sum_{d\text{ odd, }d|m}(-1)^{(d-1)/2}\left(\frac{d}{N}\right)\]and\[E^{\chi_N, \chi_4}(q)=\sum_{m=1}^\infty q^m\sum_{d\text{ odd, }de=m}(-1)^{(d-1)/2}\left(\frac{e}{N}\right)\]respectively.  The constant term of the former is, by the class number formula, equal to $h(-N)/2$.  Otherwise, the forms are cusp forms $f_{\delta}$ with no constant term.

\begin{lem}The $q$-expansion of $f_{\delta}$ is given by $f_{\delta}=\displaystyle\sum_{m\geq 1}q^m\sum_{I\subseteq\mathcal{O}_K: N(I)=m}\delta(I)$.\end{lem}
\begin{proof}If $p$ is a prime inert in $K$, then there is no $I$ with $N(I)=p$.  In the representation $\Ind_K^{\Q}\delta$, $\Frob_p$ is antidiagonal, so it has trace $0$, which is therefore the Hecke eigenvalue.  So for $p$ inert in $K$, the coefficient is correct.  If $p=\mathfrak{p}_1\mathfrak{p}_2$, then $\sum_{I\subseteq\mathcal{O}_K: N(I)=p}\delta(I)=\delta(\mathfrak{p}_1)+\delta(\mathfrak{p}_2)$, and the trace of $\Frob_p$ in the representation is also $\delta(\mathfrak{p}_1)+\delta(\mathfrak{p}_2)$ because the restriction of $\Ind_K^{\Q}\delta$ to $G_K$ is diagonal with characters $\delta$ and $\delta^g$ for $g$ a lift of the nontrivial element of $\Gal(K/\Q)$ and $\delta^g(h)$ meaning $\delta(ghg^{-1})$.  Since all primes over $p$ are conjugate, $\delta^g(\mathfrak{p}_1)=\delta(\mathfrak{p}_2)$ and so the trace of $\Frob_p$ is $\delta(\mathfrak{p}_1)+\delta(\mathfrak{p}_2)$ as we needed.

If $p=N$, the ideal over $N$ is principal, and so splits completely in $M/K$; on inertia invariants, therefore, its Frobenius is trivial and the coefficient of $q^N$ is $1$, as is necessary since $\delta((\sqrt{-N}))=1$ because $\delta$ is a character of the class group.  And if $p=2$, the ideal $\mathfrak{p}$ over $2$ has order $2$ in the class group.  The inertia subgroup for some prime over $2$ in $M$ is generated by some lift of the nontrivial element of $\Gal(K/\Q)$, and the decomposition group is the product of this group with the subgroup of $\Gal(M/K)$ corresponding to the class of $\mathfrak{p}$.  And so on inertia invariants, the eigenvalue of the decomposition group is the eigenvalue of $\Frob_{\mathfrak{p}}$, which is $\delta(\mathfrak{p})$.  So the coefficient for $q^2$ is correct as well.

Finally, we can check using multiplicativity of both Hecke operators and the norm map, as well as the formula for the Hecke operators $T_{p^k}$, that the coefficients of $q^m$ for composite $m$ are as described also.
\end{proof}

We consider $\sum_{\delta}f_\delta$, cusp forms with their multiplicity (stemming from $\delta$ and $\delta^{-1}$ giving the same form) and the Eisenstein series once.  By independence of characters, for each ideal $I$ where $\delta(I)=1$ for all $\delta$, that is $I$ is in the identity of the class group, the corresponding term in the sum is $h(-N)$, and for each other nonzero ideal $I$, the term vanishes in the sum.  The sum is thus \begin{align*}L(\chi_{4N}, 0)/2+h(-N)\sum_{0\neq I=(\alpha)}q^{N(I)}&=h(-N)/2+\frac{h(-N)}{|\mathcal{O}_K^{\times}|}\sum_{0\neq\alpha=a+b\sqrt{-N}\in \mathcal{O}_K}q^{N(\alpha)}
\\ &=\frac{h(-N)}{2}\left(1+\sum_{(0, 0)\neq (a, b)\in\Z}q^{a^2+Nb^2}\right).\end{align*}Dividing by $h(-N)/2$ gives the required form, which we call $g$.
\end{proof}

It turns out that there is a form (not an eigenform) of level $\Gamma_1(N)$ which lifts the Hasse invariant.  It is a linear combination of the Eisenstein series $E^{\epsilon,\mathbf{1}}(q)$ for $\epsilon$ ranging over all $2^{v_2(N)}$-order characters of $\Z/N\Z^{\times}$, and has the correct nebentypus when reduced because all $2$-power roots of unity are $1\bmod{}$ the maximal ideal over $2$ in $\Z[\eta]$.  This form is described by MathOverflow user Electric Penguin in \cite{228596}.

So we take $fg$ and reduce the coefficients mod the maximal ideal over $2$ and get a form $h\in S_2(\overline{\F}_2[x]/(x^{2^e}-1),\Gamma_0(4N))$.  We know that $h$ remains an eigenform because for odd primes, $p\equiv1\bmod2$ so increasing the weight doesn't change the Hecke action on the coefficients, and for $2$ increasing the weight does not change the action of $U_2$ on $q$-expansions.  Because $h$ is an eigenform, if $\T(4N)$ now represents the Hecke algebra acting on weight $2$ forms of level $\Gamma_0(4N)$, we get a ring homomorphism $\overline{\gamma}: \T(4N)\rightarrow\overline{\F}_2[x]/(x^{2^e}-1)$.  The image of this map tensored with $\overline{\F}_2$ is the entirety of $\overline{\F}_2[x]/(x^{2^e}-1)$: we have prime ideals of $K$ in all elements of the class group, so if $\mu$ is some nonzero element in the image of $\overline{\chi}$ not equal to $1$, then both $\mu x+\mu^{-1}x^{-1}$ and $\mu x^{-1}+\mu^{-1}x$ are in the image of $\overline{\gamma}$, so that \[\mu^{-1}(\mu x^{-1}+\mu^{-1}x)+\mu(\mu x+\mu^{-1}x^{-1})=(\mu^2+\mu^{-2})x\] is in the $\overline{\F}_2$ vector space generated by the image of $\overline{\gamma}$, and hence $x$ is also.  And since $\overline{\gamma}$ is a ring homomorphism, all powers of $x$ lie in the filled out image.

As described in \cite[Section 3.3]{MR2460912}, we may find a representation \[G_{\Q}\rightarrow\GL_2(\overline{\F}_2[x]/(x^{2^e}-1)),\] in the following way: we let $\mathfrak{a}'$ denote the kernel of $\T(4N)\xrightarrow{\overline{\gamma}}\overline{\F}_2[x]/(x^{2^e}-1)\xrightarrow{x\mapsto1}\overline{\F}_2$, and we let $\T(4N)_{\mathfrak{a}'}$ denote the completion of $\T(4N)$ with respect to that ideal.  The Galois action on $J_0(4N)[\mathfrak{a}']$ breaks into isomorphic $2$-dimensional representations $G_{\Q}\rightarrow\GL_2(\T(4N)/\mathfrak{a}')$, and Carayol constructs a lift $G_{\Q}\rightarrow\GL_2(\T(4N)_{\mathfrak{a}'})$ \cite[Theorem 3]{MR1279611}.  We pushforward this map along $\T(4N)_{\mathfrak{a}'}\rightarrow\overline{\F}_2[x]/(x^{2^e}-1)$ which also has full image to get a representation $G_{\Q}\rightarrow\overline{\F}_2[x]/(x^{2^e}-1)$.  It's clear that this representation is isomorphic to $\overline{\rho}'=\Ind_{K}^{\Q}\overline{\chi}'$ by looking at traces.  So $\overline{\rho}'$ is modular of level $\Gamma_0(4N)$.

We know that $h$ is an eigenform for $U_2$, and the operator $U_2$ lowers the level from $4N$ to $2N$.  So $h=U_2h$ is an eigenform of level $\Gamma_0(2N)$.  We recall the level lowering theorem of Calegari and Emerton; here $A$ is an Artinian local ring of residue field $k$ of characteristic $2$.

\begin{thm}[{\cite[Theorem 3.14]{MR2460912}}]\label{calem}If $\rho:G_{\Q}\rightarrow\GL_2(A)$ is a modular Galois representation of level $\Gamma_0(2N)$, such that\begin{enumerate}\item$\overline{\rho}$ is (absolutely) irreducible,\item$\overline{\rho}$ is ordinary and ramified at $2$, and\item$\rho$ is finite flat at $2$,\end{enumerate}then $\rho$ arises from an $A$-valued Hecke eigenform of level $N$.\end{thm}
Our $\overline{\rho}'$, pushed forward through the map $\overline{\F}_2[x]/(x^{2^e}-1)\rightarrow\overline{\F}_2$ and restricting to its true image, is irreducible, ordinary and ramified.  All that remains in order to apply the theorem is to check that $\overline{\rho}'$ is finite flat at $2$.  It's enough to show this after restricting to $\Gal(\overline{\Q}_2/\Q_2^{\text{ur}})$.  But the representation has only degree two ramification, so the image of $\Gal(\overline{\Q}_2/\Q_2^{\text{ur}})$ is order $2$.  And furthermore, it's easy to see that it arises as the generic fiber of $D^{\oplus2^e}$ over $\Z_2^{\text{ur}}$, where $D$ is the nontrivial extension of $\Z/2\Z$ by $\mu_2$ discussed in \cite[Proposition 4.2]{MR488287}, represented for example by $\Z_2[x,y]/(x^2-x,y^2+2x-1)$ with comultiplication \[x\rightarrow x_1+x_2-2x_1x_2\text{ and }y\rightarrow y_1y_2-2x_1x_2y_1y_2.\]  So we may apply Theorem~\ref{calem}, and deduce that our modular form $h$ is a modular form of level $N$.

We have therefore constructed a surjective map $\T_{\mathfrak{m}}\otimes_{\Z_2}\overline{\F}_2\rightarrow\overline{\F}_2[x]/(x^{2^e}-1)$, so the $\overline{\F}_2$-dimension of $S_2(\Gamma_0(N),\overline{\F}_2)_{\mathfrak{m}}$ must be at least $2^e$.  Note that Proposition~\ref{genusthy} shows that this dimension is at least $4$.
\end{proof}

\subsection{$\mathfrak{m}$ is reducible}\label{sec23}
\begin{thm}If $N\equiv1\bmod 8$, and $\mathfrak{m}$ is a maximal ideal of $\T_2^{\textup{an}}(N)$ for which $\overline{\rho}_{\mathfrak{m}}$ is reducible, then $\dim S_2(N)_{\mathfrak{m}}\geq \frac{h(-N)^{\textup{even}}-2}{2}$.\end{thm}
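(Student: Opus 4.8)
The plan is to adapt the construction of Section~\ref{sec22} to the case of a trivial residual character. First I would pin down $\mathfrak{m}$: the relevant reducible maximal ideal is the Eisenstein one, $\overline{\rho}_{\mathfrak{m}}^{\textup{ss}}=\mathbf{1}\oplus\mathbf{1}$ — over $\overline{\F}_2$ the mod-$2$ cyclotomic character is trivial, and a weight-$2$ level-$\Gamma_0(N)$ eigensystem leaves no room for a nontrivial character — and by Mazur, since the numerator of $(N-1)/12$ is even (as $N\equiv1\bmod 8$), $S_2(N)_{\mathfrak{m}}\neq0$. Put $K=\Q(\sqrt{-N})$; by Proposition~\ref{genusthy}, $\Cl(K)[2^{\infty}]$ is cyclic of order $2^e:=h(-N)^{\textup{even}}$ with $e\geq2$. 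The $2^e$ characters $\delta$ of $\Cl(K)$ of $2$-power order are exactly those reducing to the trivial character mod~$2$, so each $\Ind_K^{\Q}\delta$ lifts $\mathbf{1}\oplus\mathbf{1}$. The trivial character and the order-$2$ genus character satisfy $\delta=\delta^{-1}$ and give the reducible inductions, realized as in Section~\ref{sec22} by the weight-$1$ Eisenstein series $E^{\chi_{4N},\mathbf{1}}$ and $E^{\chi_N,\chi_4}$; the other $2^e-2$ characters form $2^{e-1}-1=\frac{h(-N)^{\textup{even}}-2}{2}$ conjugate pairs $\{\delta,\delta^{-1}\}$, each giving an irreducible weight-$1$ dihedral \emph{cusp} form $f_{\delta}$ of level $4N$, distinguished from the others by the Frobenius traces $\delta(\mathfrak{l})+\delta(\mathfrak{l})^{-1}$ at primes $\mathfrak{l}$ over split rational primes. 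These pairs supply the lower bound.

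I would then run the argument of Section~\ref{sec22} verbatim with $\overline{\chi}$ replaced by the trivial character: let $L'/K$ be the unramified $\Z/2^e\Z$-extension, form $\overline{\chi}'\colon\Gal(L'/K)\to(\overline{\F}_2[y]/(y^{2^e}-1))^{\times}$ sending a generator to $y$, set $\overline{\rho}'=\Ind_K^{\Q}\overline{\chi}'$ (a deformation of $\mathbf{1}\oplus\mathbf{1}$), lift to characteristic $0$, split $\Q_2^{\textup{ur}}[y]/(y^{2^e}-1)\cong\bigoplus_{i=0}^{e}\Q_2^{\textup{ur}}(\zeta_{2^i})$, apply the Weil--Langlands converse theorem to each factor to get weight-$1$ forms $f_i$ (Eisenstein series for $i\leq1$, dihedral cusp forms for $i\geq2$), package them into a weight-$1$ form over $\Z_2^{\textup{ur}}[y]/(y^{2^e}-1)$, multiply by $g=\sum_{m,n}q^{m^2+Nn^2}\equiv1\bmod2$ and by the weight-$1$ lift of the Hasse invariant at level $\Gamma_1(N)$, and apply $U_2$. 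This produces a mod-$2$ weight-$2$ eigenform $h$ of level $\Gamma_0(2N)$ realizing $\overline{\rho}'$.

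The descent of $h$ to level $\Gamma_0(N)$ is where I expect the real work to lie, and it is precisely the step that does \emph{not} carry over from Section~\ref{sec22}: Theorem~\ref{calem} is unavailable because its first hypothesis, absolute irreducibility of $\overline{\rho}'$, fails here. One must instead use a level-lowering statement valid for residually reducible, ordinary, finite-flat representations — available to us because $h$ is ordinary at $2$ (its $U_2$-eigenvalue is the unit $\overline{\chi}'(\mathfrak{P})$, $\mathfrak{P}\mid2$ in $K$) and $\overline{\rho}'$ is finite flat at $2$ (here $K_{\mathfrak{P}}=\Q_2(i)$ and $\overline{\rho}'|_{D_2}$ is the generic fibre of $D^{\oplus2^e}$ over $\Z_2^{\textup{ur}}$, $D$ as in Section~\ref{sec22}) — together with the analysis of Eisenstein-local Hecke algebras developed in \cite{MR2460912}. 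There is also a subsidiary point to build in: $\mathfrak{P}$ is never principal (as $a^2+Nb^2=2$ is insoluble), so its class has order exactly $2$ in $\Cl(K)$ and $\overline{\chi}'(\mathfrak{P})=y^{2^{e-1}}$ is a nontrivial involution; this forces the descent to take place over an appropriate quotient of $R:=\overline{\F}_2[y]/(y^{2^e}-1)$, and because (unlike in Section~\ref{sec22}) there is no nontrivial odd-order character to enlarge the image, one only obtains a surjection from the level-$N$ Hecke algebra onto a quotient of $R$ of $\overline{\F}_2$-dimension $2^{e-1}$ — matching the $2^{e-1}-1$ conjugate pairs together with the residual eigensystem. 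Arguing as in Proposition~\ref{prop1} (for $M_2$ rather than $S_2$), this gives $\dimn_{\overline{\F}_2}M_2(\Gamma_0(N),\overline{\F}_2)_{\mathfrak{m}}\geq2^{e-1}$, and subtracting the one dimension of the weight-$2$ level-$N$ Eisenstein series (which lies in $M_2(\Gamma_0(N))_{\mathfrak{m}}$ but is not a cusp form) yields $\dimn S_2(N)_{\mathfrak{m}}\geq2^{e-1}-1=\frac{h(-N)^{\textup{even}}-2}{2}$.
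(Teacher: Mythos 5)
Your strategy --- adapting the dihedral construction of Section~\ref{sec22} to the $2$-power-order characters of $\Cl(\Q(\sqrt{-N}))$, whose inductions all reduce to $\mathbf{1}\oplus\mathbf{1}$ --- is a genuinely different route from the paper, and the numerology at the end ($2^{e-1}-1$ conjugate pairs of characters, image of the Hecke algebra equal to the symmetric subring $\overline{\F}_2[y+y^{-1}]/((y+y^{-1})^{2^{e-1}})$ of $\overline{\F}_2[y]/(y^{2^e}-1)$) is correct and matches the answer. But there is a genuine gap exactly where you flag it, and flagging it does not fill it: the descent from level $\Gamma_0(2N)$ to $\Gamma_0(N)$. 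Theorem~\ref{calem} requires absolute irreducibility of the residual representation, and no substitute ``level-lowering statement valid for residually reducible, ordinary, finite-flat representations'' is cited or proved. This is not a routine omission: level lowering at Eisenstein primes is precisely the situation where multiplicity one and the standard Mazur--Ribet arguments break down, the Eisenstein ideal at the non-prime level $2N$ has several maximal ideals and a much more delicate structure, and one must separately rule out the possibility that your form $h$ is old-at-$2$ only up to Eisenstein contributions that do not descend. The subsequent assertion that the level-$N$ Hecke algebra surjects onto a quotient of $\overline{\F}_2[y]/(y^{2^e}-1)$ of dimension $2^{e-1}$ is likewise only asserted, and it is the entire content of the theorem.

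The paper avoids this difficulty altogether: it quotes Calegari--Emerton's $R=\T$ theorem for the Eisenstein ideal at prime level $N$, which identifies the completed Hecke algebra with an explicit ordinary finite-flat deformation ring of $\left(\begin{smallmatrix}1&\phi\\0&1\end{smallmatrix}\right)$ and computes $\T^{\textup{an}}_{\mathfrak{m}}/(2)\cong\F_2[x]/(x^{2^{e-1}})$ directly via class field theory (the universal mod-$2$ deformation factors through the maximal unramified $2$-extension of $K$, whose Galois group is cyclic of order $2^e$ by Proposition~\ref{genusthy}). The dimension bound then falls out of the rank of $\T$ after splitting off the Eisenstein line, with no construction of forms, no auxiliary level, and no level lowering. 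If you want to salvage your approach, the honest path is to prove that your weight-$1$ dihedral cusp forms $f_{\delta}$ already contribute to weight-$2$ level-$N$ cohomology --- but making that rigorous essentially forces you back to the deformation-theoretic argument.
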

\begin{proof}
We know that $\mathfrak{m}\subseteq\T^{\text{an}}$ is generated by $T_{\ell}$ and $2$ for all primes $\ell\not|2N$.  In \cite[Corollary~4.9]{MR2129709} and the discussion after Proposition 4.11,  Calegari and Emerton prove that $\T^{\text{an}}_{\mathfrak{m}}/(2)$ must be isomorphic to $\F_2[x]/(x^{2^{e-1}})$, where $2^e=h(-N)^{\text{even}}$.  They accomplish this by setting up a deformation problem, namely deformations of $(\overline{V}, \overline{L}, \overline{\rho})$ where $\overline{\rho}$ is the mod-2 representation $\left(\begin{smallmatrix}1&\phi\\ 0&1\end{smallmatrix}\right)$, $\phi$ is the additive character $G_{\Q}\rightarrow \F_2$ that arises as the nontrivial character of $\Gal(\Q(i)/\Q)$, and $\overline{L}$ is a line in $\overline{V}$ not fixed by $G_{\Q}$.  With the conditions set on the deformation, they find that it is representable by some $\Z_2$-algebra $R$.

Next, they prove an $R=\T$-type theorem, namely that $R=\T$ where $\T$ is the completion at the Eisenstein ideal of the Hecke algebra acting on all modular forms of level $\Gamma_0(N)$, including the Eisenstein series.  Finally they study $R/2$ which represents the deformation functor to characteristic $2$ rings, and show that if $\rho^{\text{univ}}$ is the universal deformation, then $\rho^{\text{univ}}$ factors through the largest unramified $2$-extension of $K$.  This combined with their fact that a map $R\rightarrow \F_2[x]/(x^n)$ can be surjective if and only if $n\leq 2^{e-1}$ proves that $R/2=\F_2[x]/(x^{2^{e-1}})$.

Therefore, the same holds for the Eisenstein Hecke algebra $\T/2$.  So we know that $\T$ is a free $\Z_2$-module of rank $\frac{h(-N)^{\text{even}}}{2}$.  But we may split off a one-dimensional subspace corresponding to the Eisenstein series, so that the cuspidal Hecke algebra $\T^{\text{an}}_{\mathfrak{m}}$ has rank one less, and therefore has rank $\frac{h(-N)^{\text{even}}}{2}-1$.  (In fact, the full Hecke algebra is determined also, because in any reducible ${} \bmod 2$ representation, $T_2$ and $U_N$ must both map to $1$, as $U_N$ is unipotent and $T_2$ maps to the image of Frobenius under a ${} \bmod 2$ character unramified at every prime not equal to $2$.  But there are no nontrivial such characters.)  And therefore the dimension of the space $S_2(N)_{\mathfrak{m}}$ is the dimension of the space $\Hom(\T^{\text{an}}_{\mathfrak{m}},\overline{\F}_2)$, which is dimension $\frac{h(-N)^{\text{even}}}{2}-1$, as desired.
\end{proof}
\cite{kedlaya2019} partially prove this theorem using \cite{MR2129709}, doing the case of $N\equiv 9\bmod16$.  As we see, the method works equally well for $N\equiv1\bmod16$.  The only difference between the two cases is that \cite{MR2129709} prove that for $N\equiv9\bmod16$, the Hecke algebra $\T_{\mathfrak{m}}^{\text{an}}$ is a discrete valuation ring, and therefore a domain, but that plays no role here.
\section{$N\equiv5\bmod 8$}\label{sec3}
\subsection{$K=\Q(\sqrt{N})$}\label{sec31}
\begin{thm}If $N\equiv5\bmod 8$, and $\mathfrak{m}$ is a maximal ideal of $\T_2^{\textup{an}}(N)$ that is $\Q(\sqrt{N})$-dihedral, then $\dim S_2(N)_{\mathfrak{m}}\geq 4$.\end{thm}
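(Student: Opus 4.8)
# Proof Proposal for the Case $N \equiv 5 \bmod 8$, $K = \Q(\sqrt{N})$

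The plan is to mimic the structure of the $N \equiv 1 \bmod 8$, $K = \Q(\sqrt{N})$ argument from Section~\ref{sec21}, since the field $K = \Q(\sqrt{N})$ is real and the corresponding $\overline{\rho}$ is totally real and absolutely irreducible (it is dihedral, being an induction from a nontrivial character of the class group of $K$). Thus Propositions~\ref{prop1} and~\ref{prop2} are available: for any maximal ideal $\mathfrak{a}$ of $\T_2$ above $\mathfrak{m}$ we have $\dim_{k_{\mathfrak{a}}} \T_{\mathfrak{a}}/(2) \geq 2 \cdot (\text{multiplicity of } \overline{\rho} \text{ in } A)$, and $\dim S_2(N)_{\mathfrak{m}} = \sum_{\mathfrak{m} \subseteq \mathfrak{a}} [k_{\mathfrak{a}}:\F_2]\dim_{k_{\mathfrak{a}}} \T_{\mathfrak{a}}/(2)$. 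So it suffices to produce a total contribution of at least $4$ across the primes $\mathfrak{a}$ above $\mathfrak{m}$ in $\T_2 = \T_2^{\textup{an}}[T_2, U_N]$.

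First I would analyze the behavior at $2$. Since $N \equiv 5 \bmod 8$, the prime $2$ is inert in $K = \Q(\sqrt{N})$ (as $N \equiv 5 \bmod 8$ means $N$ is not a square mod $8$), so $\Frob_2 \in \Gal(K/\Q) \setminus \{1\}$, i.e. $\Frob_2$ acts on $L/\Q$ by swapping the two eigenlines of $\overline{\rho}|_{G_K} = \overline{\chi} \oplus \overline{\chi}^g$. Hence $\overline{\rho}(\Frob_2)$ is anti-diagonal, so $P(x) := \det(x\Id_2 - \overline{\rho}(\Frob_2))$ has the form $x^2 - \overline{\chi}(\mathfrak{p}_2)$-type constant term with vanishing trace; concretely $T_2 \equiv 0 \bmod \mathfrak{m}$ and $P(x) = x^2 - c$ for a unit $c \in k^\times$ (the norm-type term). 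Here is where the hypothesis \emph{ordinary} enters: ordinarity forces $a(2)$ to be a unit mod $2$, but we have just seen $a(2) = \Tr\overline{\rho}(\Frob_2) = 0$ — so in fact the purely dihedral $\Q(\sqrt{N})$-representation with $2$ inert is \emph{never} ordinary, which means the statement as applied is about the characteristic-$0$ forms lying above $\mathfrak{m}$ being non-ordinary, and the relevant split comes from $\T_2$ adjoining $T_2$. Since $P(x) = x^2 - c$ with $c$ a unit: if $c$ is a square in $k$, then $T_2$ (or rather its Hensel lift) has its minimal polynomial splitting into two factors over $\T_\mathfrak{m}^{\textup{an}}$, giving two distinct maximal ideals $\mathfrak{a}_1, \mathfrak{a}_2$ of $\T_2$ above $\mathfrak{m}$, each contributing $\dim_{k_{\mathfrak{a}_i}} \T_{\mathfrak{a}_i}/(2) \geq 2$, for a total of $\geq 4$; if $c$ is a non-square, then $x^2 - c$ is irreducible over $k$, so $[k_{\mathfrak{a}} : k] \geq 2$ for the unique $\mathfrak{a}$ above $\mathfrak{m}$, and $[k_{\mathfrak{a}}:\F_2]\dim_{k_{\mathfrak{a}}}\T_{\mathfrak{a}}/(2) \geq 2 \cdot 2 = 4$.

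The main obstacle I anticipate is the double-root case and, more basically, confirming that one of the two clean cases always holds — i.e., ruling out or handling the possibility that $P$ has a repeated root. Since $P(x) = x^2 - c$ in characteristic $2$ always has the repeated root $\sqrt{c}$ (because $x^2 - c = (x - \sqrt{c})^2$ over $\overline{\F}_2$), the trichotomy from Section~\ref{sec21} is really a dichotomy: either $c$ is a square in $k$ (then $P = (x-\sqrt c)^2$ already over $k$, a genuine double root) or not. So the analysis must mirror the ``$P$ has a double root'' case of the earlier proof: when the double root lies in $k$, one invokes the multiplicity-$2$ phenomenon. Here I would appeal to the same input — all dihedral representations arise from Katz weight~$1$ forms (Wiese, \cite{MR2054983}), so by \cite[Corollary~4.5]{MR2336635} the multiplicity of $\overline{\rho}$ in $A = J_0(N)[\mathfrak{a}]$ is~$2$ — but I must verify its hypotheses apply when $\overline{\rho}|_{D_2}$ is \emph{not} trivial (unlike the $N \equiv 1$ case, where the diagonal entries were forced to be $1$; here $\overline{\rho}|_{D_2}$ is the non-split-but-semisimple anti-diagonal matrix of order dividing~$2$, which after base change to $\overline{\F}_2$ is diagonal with eigenvalues $\pm\sqrt c = \sqrt c$, hence a scalar — so actually $\overline{\rho}|_{D_2}$ \emph{is} scalar, and the multiplicity-$2$ criterion of \cite{MR2336635} should still apply). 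Then $\dim_{k_{\mathfrak{a}}}\T_{\mathfrak{a}}/(2) \geq 2 \cdot 2 = 4$ from Proposition~\ref{prop2}, completing every case. I would double-check the inertness of $2$, the precise form of the multiplicity statement in the semisimple-at-$2$ (non-trivial) setting, and that $U_N$ contributes no further subtlety (it is unipotent on $\overline{\rho}$ and in $\T_2^{\textup{an}}$ by Lemma~5.1, so it introduces no new maximal ideals).
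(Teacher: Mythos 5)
Your overall skeleton (Propositions \ref{prop1} and \ref{prop2} plus a multiplicity-two input from Wiese) matches the paper's, but two of your key local-at-$2$ computations are wrong, and they sit exactly where the content of this case lives. First, $a(2)$ is not $\Tr\overline{\rho}(\Frob_2)$: since $2$ is the residue characteristic, the coefficient of $q^2$ is governed by Theorem \ref{deligne}, which says that for an ordinary form $a(2)$ reduces to the unit root of $X^2-a_2X+2$, i.e.\ to the eigenvalue of $\overline{\rho}(\Frob_2)$ on the unramified quotient. Here $\overline{\rho}(\Frob_2)$ is anti-diagonal with $\det=1$ (the mod-$2$ cyclotomic character is trivial), so its characteristic polynomial is $(x-1)^2$ and $a_2\equiv 1$. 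Your conclusion that these $\mathfrak{m}$ are never ordinary is therefore false (and would render the conjecture vacuous); the correct conclusion, drawn in the paper, is that $T_2\equiv 1$ at every ordinary $\mathfrak{a}$ above $\mathfrak{m}$, so that none of the three $T_2$-splitting tricks of Section \ref{sec21} applies. Relatedly, your proposed dichotomy on whether $c$ is a square is vacuous, since $c=\det\overline{\rho}(\Frob_2)=1$.

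Second, and more seriously, an anti-diagonal matrix of determinant $1$ in characteristic $2$ is \emph{not} semisimple: $\left(\begin{smallmatrix}0&a\\a^{-1}&0\end{smallmatrix}\right)-\Id$ has rank $1$, so the minimal polynomial is $(x-1)^2$ and $\overline{\rho}(\Frob_2)$ is a nontrivial unipotent, conjugate to $\left(\begin{smallmatrix}1&1\\0&1\end{smallmatrix}\right)$ --- not a scalar. This is the opposite of the situation in Section \ref{sec21}, where $2$ splits and $\overline{\rho}|_{D_2}$ is trivial; so the hypothesis under which the paper invokes \cite[Corollary 4.5]{MR2336635} (scalar Frobenius, giving multiplicity $2$ in $J_0(N)[\mathfrak{a}]$) genuinely fails here, and your attempt to salvage it by declaring $\overline{\rho}|_{D_2}$ scalar does not work. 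The paper's actual route is: if the multiplicity of $\overline{\rho}$ in $J_0(N)[\mathfrak{a}]$ is already at least $2$, Proposition \ref{prop2} finishes; otherwise it equals $1$, and then \cite[Theorem 4.4]{MR2336635} (a different statement, adapted to non-scalar $\Frob_2$) gives multiplicity $2$ in the \emph{anemic} kernel $J_0(N)[\mathfrak{m}]$, after which one reruns the real-points argument of Proposition \ref{prop2} with $\mathfrak{m}$ in place of $\mathfrak{a}$ to get $\dim\T_{\mathfrak{m}}/(2)\geq 4$. This $\mathfrak{a}$-versus-$\mathfrak{m}$ distinction is the missing idea in your write-up; without it the multiplicity-two input you want is simply not available.
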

\begin{proof}Because $2$ is inert in $\Q(\sqrt{N})$, we know that $\overline{\rho}|_{D_2}$ is of size $2$.  Then the image of $\overline{\rho}$ is a subgroup of a $2$-Sylow subgroup of $\GL_2(\overline{\F}_2)$, and therefore is isomorphic to an upper-triangular idempotent representation $\overline{\rho}|_{D_2}\simeq \left(\begin{smallmatrix}1& *\\0&1\end{smallmatrix}\right)$.  If we compare to Theorem~\ref{deligne}, we find that in an eigenform for all $T_p$ including $T_2$ that corresponds to this representation, $a_2=1$.  So the three methods of section \ref{sec21} do not work.

Recall Proposition~\ref{prop2} that says if the representation $\overline{\rho}$ was totally real, then $\dimn_{k_{\mathfrak{a}}}\T_{\mathfrak{a}}/(2)\geq 2\cdot\text{multiplicity of }\overline{\rho}$, so if this multiplicity is at least $2$, we're done.  So we assume that $\overline{\rho}$ occurs once in $J_0(N)[\mathfrak{a}]$.  However, we know by \cite[Theorem 4.4]{MR2336635} that since $\overline{\rho}$ comes from a Katz modular form of weight $1$ and level $N$, and the multiplicity of $\overline{\rho}$ on $J_0(N)[\mathfrak{a}]$ is $1$, that the multiplicity of $\overline{\rho}$ in $J_0(N)[\mathfrak{m}]$ is $2$.  So by the proof as in Proposition~\ref{prop2}, we know the dimension of $\T_{\mathfrak{m}}/(2)$ has dimension at least twice $2$, or dimension $4$, and so $\dimn S_2(N)_{\mathfrak{m}}\geq 4$ as required.
\end{proof}

\subsection{$K=\Q(\sqrt{-N})$}\label{sec32}
\begin{thm}If $N\equiv5\bmod 8$, and $\mathfrak{m}$ is a maximal ideal of $\T_2^{\textup{an}}(N)$ that is $\Q(\sqrt{-N})$-dihedral, then $\dim S_2(N)_{\mathfrak{m}}\geq 2$.\end{thm}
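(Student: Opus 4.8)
The plan is to repeat, almost verbatim, the argument proving Theorem~\ref{thm22}, now with $2^e=2$. Write $K=\Q(\sqrt{-N})$. Since $N\equiv 5\bmod 8$ we still have $-N\equiv 3\bmod 4$, so $K$ has discriminant $-4N$ and $2$ ramifies in $K$; hence $\overline{\rho}_{\mathfrak{m}}$ is ramified at $2$. Indeed, if $L$ is the fixed field of $\ker\overline{\rho}$, then (because $L/K$ is unramified while $K/\Q$ is ramified at $2$) the inertia group at $2$ inside $\Gal(L/\Q)$ maps isomorphically onto $\Gal(K/\Q)$ and interchanges the two characters $\overline{\chi}$ and $\overline{\chi}^g$ appearing in $\overline{\rho}|_{G_K}$, so $\overline{\rho}|_{D_2}$ is conjugate to an upper-triangular representation that is a nontrivial unipotent on inertia; in particular $\overline{\rho}$ is ordinary and ramified at $2$ (so the "ordinary" hypothesis is automatic, as in the statement). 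By Proposition~\ref{genusthy}(a), exactly two primes ($2$ and $N$) divide the discriminant of $K$, so $\Cl(K)[2]\cong\Z/2\Z$; together with Proposition~\ref{genusthy}(b) this says the $2$-part of $\Cl(K)$ is cyclic of order $2$, giving an unramified quadratic extension $L'/K$ and $2^e=2$.

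Now I follow the proof of Theorem~\ref{thm22} step by step. Set $M=LL'$; since $[L:K]$ is odd and $[L':K]=2$, the character $\overline{\chi}$ (which is nontrivial and of odd order, since $\mathfrak{m}$ is dihedral) extends to $\overline{\chi}'\colon\Gal(M/K)\to\overline{\F}_2[x]/(x^2-1)$ by sending a generator of $\Gal(L'/K)$ to $x$, and one forms $\overline{\rho}'=\Ind_K^\Q\overline{\chi}'\colon\Gal(M/\Q)\to\GL_2(\overline{\F}_2[x]/(x^2-1))$. Lifting and tensoring with $\Q_2$, the decomposition $\Q_2^{\text{ur}}[x]/(x^2-1)\cong\Q_2^{\text{ur}}\oplus\Q_2^{\text{ur}}$ (via $x\mapsto\pm1$) produces two finite-image odd representations, each induced from an unramified character of $G_K$, hence of conductor $4N$ and nebentypus $\chi_{4N}$; Weil--Langlands converts them to weight-$1$ eigenforms, which recombine into a weight-$1$ form $f$ with coefficients in $\Z_2^{\text{ur}}[x]/(x^2-1)$. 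Multiplying by the theta series $g=\sum_{m,n}q^{m^2+Nn^2}$ of weight $1$, level $\Gamma_1(4N)$, nebentypus $\chi_{4N}$ and $q$-expansion $\equiv 1\bmod 2$ (the Lemma in the proof of Theorem~\ref{thm22}, whose verification needs only that $K$ has discriminant $4N$ and that the prime above $2$ has order $2$ in $\Cl(K)$, both of which hold here), then reducing modulo the prime over $2$, gives a weight-$2$ eigenform $h\in S_2(\overline{\F}_2[x]/(x^2-1),\Gamma_0(4N))$ with $\overline{\rho}_h\cong\overline{\rho}'$. The resulting Hecke homomorphism $\overline{\gamma}\colon\T(4N)\to\overline{\F}_2[x]/(x^2-1)$ has image spanning the whole ring over $\overline{\F}_2$: choosing $\mu\neq 1$ in the image of $\overline{\chi}$, the element $(\mu^2+\mu^{-2})x$ lies in the $\overline{\F}_2$-span of the image, and $\mu^2+\mu^{-2}\neq 0$ because $\overline{\F}_2^\times$ has odd order, so $x$, and hence every element, is hit (the computation is even simpler than in Theorem~\ref{thm22} since here $x^{-1}=x$).

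Applying $U_2$ descends $h$ to level $\Gamma_0(2N)$ (its $U_2$-eigenvalue is $1$ by ordinarity), and Carayol's construction yields a Galois lift $G_\Q\to\GL_2(\T(4N)_{\mathfrak{a}'})$ whose pushforward recovers $\overline{\rho}'$. Since $\overline{\rho}'$ reduced modulo $(x-1)$ equals $\overline{\rho}$, which is absolutely irreducible, ordinary and ramified at $2$, and since $\overline{\rho}'$ is finite flat at $2$ --- it has only degree-$2$ ramification and arises as the generic fibre of $D^{\oplus 2}$ over $\Z_2^{\text{ur}}$, with $D$ the nontrivial extension of $\Z/2\Z$ by $\mu_2$ --- Theorem~\ref{calem} applies and shows $h$ has level $N$. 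Thus we obtain a surjection $\T_{\mathfrak{m}}\otimes_{\Z_2}\overline{\F}_2\twoheadrightarrow\overline{\F}_2[x]/(x^2-1)$, whose target has $\overline{\F}_2$-dimension $2$, and therefore $\dim S_2(N)_{\mathfrak{m}}\geq 2$. I do not expect any genuine obstacle beyond bookkeeping: the point is simply that no step in the proof of Theorem~\ref{thm22} used $N\equiv 1\bmod 8$ rather than merely $N\equiv 1\bmod 4$ (equivalently, that $K$ has discriminant $-4N$) together with cyclicity of the $2$-part of $\Cl(K)$, so the identical machine runs, now producing $2^e=2$ in place of $2^e\geq 4$.
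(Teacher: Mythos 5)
Your proposal is correct and follows essentially the same route as the paper, which likewise reduces this case to the machinery of Theorem~\ref{thm22} with $2^e=2$ supplied by Proposition~\ref{genusthy}(b) and then checks the hypotheses of Theorem~\ref{calem}. The only point the paper makes explicit that you assert without comment is why finite flatness still holds: even though $N\equiv 5\bmod 8$, one has $\Q_2^{\text{ur}}(\sqrt{-N})=\Q_2^{\text{ur}}(i)$ (as $N\equiv1\bmod 4$ is a square in $\Q_2^{\text{ur}}$), so the local group scheme is the same $D^{\oplus 2}$ as in Section~\ref{sec22}.
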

This follows in a similar way to Theorem \ref{thm22}.  Proposition~\ref{genusthy} proves that the $2$ part of the class group of $K$ is order $2$, so applying the results of section \ref{sec22} proves the theorem in this case.  The only difficulties are in verifying the conditions of Theorem~\ref{calem}; that is, $\overline{\rho}$ is absolutely irreducible, ordinary, and ramified, and $\rho$ itself is finite flat at $2$.  It's clear that the first three conditions hold, and the final condition holds because $\Q_2^{\text{ur}}(\sqrt{-N})=\Q_2^{\text{ur}}(i)$ even though $N\equiv 5\bmod 8$.  So the group scheme in this case is the same as the group scheme in section \ref{sec22}, and we have verified all necessary conditions.
\section{$N\equiv3\bmod4$}\label{sec4}
\subsection{$K=\Q(\sqrt{N})$}\label{sec41}
\begin{thm}If $N\equiv3\bmod 4$, and $\mathfrak{m}$ is a maximal ideal of $\T_2^{\textup{an}}(N)$ that is $\Q(\sqrt{N})$-dihedral, then $\dim S_2(N)_{\mathfrak{m}}\geq 2$.\end{thm}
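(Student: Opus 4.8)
The plan is to obtain this case directly from Propositions~\ref{prop1} and~\ref{prop2}: the only genuinely new input needed is that the representation $\overline\rho$ attached to $\mathfrak m$ is totally real, and this is forced on us by the fact that $K=\Q(\sqrt N)$ is a real quadratic field (as $N>0$), so that complex conjugation lies in $\Gal(\overline\Q/K)$. Granting that, Proposition~\ref{prop2} supplies $\dim_{k_{\mathfrak a}}\T_{\mathfrak a}/(2)\ge 2$ for an appropriate maximal ideal $\mathfrak a$ of $\T_2$ above $\mathfrak m$, and Proposition~\ref{prop1} converts this into $\dim S_2(N)_{\mathfrak m}\ge 2$.

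First I would check that $\overline\rho$ is totally real. Since $\mathfrak m$ is $K$-dihedral, the image of $\pi\circ\overline\rho$ in $\PGL_2(\overline\F_2)$ is a (non-abelian) dihedral group, and $\overline\rho(\Gal(\overline\Q/K))$ maps onto its cyclic rotation subgroup. That rotation subgroup fixes two points of $\mathbb{P}^1(\overline\F_2)$ --- interchanged by the reflections --- so $\overline\rho(\Gal(\overline\Q/K))$ preserves the corresponding pair of lines in $\overline\F_2^{\,2}$ and is diagonalizable; hence $\overline\rho=\Ind_K^{\Q}\overline\chi$ for a character $\overline\chi\colon\Gal(\overline\Q/K)\to\overline\F_2^{\times}$, with $\overline\rho|_{\Gal(\overline\Q/K)}=\overline\chi\oplus\overline\chi^{\sigma}$ where $\sigma$ is the nontrivial element of $\Gal(K/\Q)$. (In fact $\overline\chi$ is unramified everywhere, i.e.\ a character of $\Cl(K)$, in accordance with the characterization of $K$ inside the fixed field of $\ker\overline\rho$.) Because $K\subseteq\R$ we may choose a complex conjugation $c\in\Gal(\overline\Q/K)$, and then $\overline\rho(c)=\operatorname{diag}\!\big(\overline\chi(c),\,\overline\chi^{\sigma}(c)\big)$; since $c^{2}=1$ and $\overline\F_2^{\times}$ contains no nontrivial square root of $1$, both entries are $1$, so $\overline\rho(c)=\Id$. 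Equivalently, the fixed field of $\ker\overline\rho$ is totally real, which is precisely the hypothesis of Proposition~\ref{prop2}.

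To conclude I would assemble the pieces. As $\mathfrak m$ is a maximal ideal of $\T_2^{\textup{an}}(N)$, the representation $\overline\rho$ occurs in $J_0(N)[2]$, so $J_0(N)[\mathfrak m]\neq 0$; pick a maximal ideal $\mathfrak a$ of $\T_2$ containing $\mathfrak m$ with $A:=J_0(N)[\mathfrak a]\neq 0$. The dihedral image is non-abelian, so $\overline\rho$ is absolutely irreducible, and by~\cite{MR1094193} $A$ is a nonzero direct sum of copies of $\overline\rho$; in particular the multiplicity of $\overline\rho$ in $A$ is at least $1$. Proposition~\ref{prop2} then gives $\dim_{k_{\mathfrak a}}\T_{\mathfrak a}/(2)\ge 2$, and Proposition~\ref{prop1} gives
\[
\dim S_2(N)_{\mathfrak m}\;=\;\sum_{\mathfrak m\subseteq\mathfrak a'}[k_{\mathfrak a'}:\F_2]\,\dim_{k_{\mathfrak a'}}\T_{\mathfrak a'}/(2)\;\ge\;[k_{\mathfrak a}:\F_2]\,\dim_{k_{\mathfrak a}}\T_{\mathfrak a}/(2)\;\ge\;2,
\]
the middle step because the remaining summands are nonnegative.

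I do not expect a serious obstacle: of the seven cases this is the most direct, being essentially a corollary of the ``totally real'' machinery already set up in Section~\ref{sec2}. The only point needing care is that the proofs of Proposition~\ref{prop2} and Lemma~\ref{ineqlem} go through verbatim --- which they do, since $N$ is prime, so that Merel's connectedness statement for $J_0(N)(\R)$, Mazur's rank-$2$ lemma, and the Boston--Lenstra--Ribet description of $J_0(N)[\mathfrak a]$ are all available. In contrast with the real-dihedral situations for $N\equiv 1$ or $5\bmod 8$, here there is no need to push the multiplicity of $\overline\rho$ beyond $1$ or to analyze the action of $T_2$, since the target bound is only $\dim S_2(N)_{\mathfrak m}\ge 2$.
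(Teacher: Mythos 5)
Your proposal is correct and follows the paper's own argument for this case: invoke Proposition~\ref{prop2} (with the observation that $K=\Q(\sqrt N)$ real forces $\overline\rho$ to be totally real) to get $\dim_{k_{\mathfrak a}}\T_{\mathfrak a}/(2)\ge 2$, then conclude via Proposition~\ref{prop1}. The extra details you supply (diagonalizability over $G_K$, triviality of complex conjugation in odd residue-characteristic-2 image, multiplicity at least $1$ via Boston--Lenstra--Ribet) are all consistent with what the paper leaves implicit.
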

\begin{proof}We let $\mathfrak{a}$ be a prime of $\T_2$ containing $\mathfrak{m}$.  Then again recalling Proposition~\ref{prop2}, since $K$ and therefore $\overline{\rho}$ are totally real, we calculate that the dimension is at least \[\dimn_{k_{\mathfrak{a}}}\T_{\mathfrak{a}}/(2)\geq2\cdot\text{multiplicity of }\overline{\rho}\geq 2\]as required.
\end{proof}
\subsection{$K=\Q(\sqrt{-N})$}\label{sec42}
\begin{thm}If $N\equiv3\bmod 4$, and $\mathfrak{m}$ is a maximal ideal of $\T_2^{\textup{an}}(N)$ that is $\Q(\sqrt{-N})$-dihedral, then $\dim S_2(N)_{\mathfrak{m}}\geq 4$.\end{thm}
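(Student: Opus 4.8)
The plan is to reduce, via Proposition~\ref{prop1}, to proving $\sum_{\mathfrak{m}\subseteq\mathfrak{a}}[k_{\mathfrak{a}}:\F_2]\dim_{k_{\mathfrak{a}}}\T_{\mathfrak{a}}/(2)\geq 4$, and to assemble this from two independent factors of $2$: one from the weight-one multiplicity-two phenomenon, the other from the behavior of $2$ in $K$. First I record the structural input. Since $N\equiv 3\bmod 4$, the field $K=\Q(\sqrt{-N})$ has odd discriminant $-N$, so $2$ is unramified in $K$; and by Proposition~\ref{genusthy}(a) (one prime divides the discriminant) the group $\Cl(K)$ has odd order, so the character $\overline{\chi}$ with $\overline{\rho}=\Ind_K^{\Q}\overline{\chi}$ has odd order at least $3$. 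Hence $\overline{\chi}\neq\overline{\chi}^{-1}=\overline{\chi}^g$, so $\overline{\rho}$ is absolutely irreducible and unramified at $2$. As in the earlier dihedral cases, $\overline{\rho}$ arises from a Katz weight-one form, so Wiese's results \cite{MR2336635} give that the multiplicity of $\overline{\rho}$ in $J_0(N)[\mathfrak{m}]$ is $2$; equivalently the socle of $\T_{\mathfrak{a}}/(2)$ is two-dimensional for the relevant $\mathfrak{a}$, which already forces $\dim_{k_{\mathfrak{a}}}\T_{\mathfrak{a}}/(2)\geq 3$ (a local Artinian $k_{\mathfrak{a}}$-algebra whose socle has dimension $s$ has length at least $1+s$).

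The essential difficulty is that both tools which produced the enhancement $2\cdot(\text{mult})$ elsewhere are unavailable here: Proposition~\ref{prop2} requires $\overline{\rho}$ to be totally real, but $\overline{\rho}$ is now odd and imaginary, so the real-points argument only recovers $\dim_{k_{\mathfrak{a}}}\T_{\mathfrak{a}}/(2)\geq\text{mult}$; and the Calegari--Emerton descent of Theorem~\ref{calem}, used in Theorem~\ref{thm22}, requires $\overline{\rho}$ to be ramified at $2$, whereas here it is unramified. I therefore extract the second factor of $2$ from the prime $2$ directly, splitting into residue classes. If $N\equiv 7\bmod 8$ then $2=\mathfrak{p}\mathfrak{q}$ splits in $K$ and $\overline{\rho}(\Frob_2)$ is the diagonal matrix with entries $\overline{\chi}(\mathfrak{p}),\overline{\chi}(\mathfrak{p})^{-1}$. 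When $\mathfrak{p}\notin\ker\overline{\chi}$ these eigenvalues are distinct, and, exactly as in the split case of Section~\ref{sec21}, the roots $x_1,x_2$ of $P(x)=\det(xI-\overline{\rho}(\Frob_2))$ produce two maximal ideals $\mathfrak{a}_1,\mathfrak{a}_2=(\mathfrak{m},T_2-x_i)$ of $\T_2$; each still carries the weight-one multiplicity two, so the socle bound contributes $\dim_{k_{\mathfrak{a}_i}}\T_{\mathfrak{a}_i}/(2)\geq 2$ and the total is $2+2=4$. If instead $N\equiv 3\bmod 8$ then $2$ is inert and $\overline{\rho}(\Frob_2)$ is a nontrivial unipotent, so $\overline{\rho}|_{D_2}$ is non-split though unramified, and both $T_2\equiv 0$ and $U_N\equiv 1$ fail to separate $\mathfrak{m}$.

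For the inert case, and the degenerate split sub-case $\mathfrak{p}\in\ker\overline{\chi}$, the Hecke eigenvalue at $2$ takes a single value, there is only one $\mathfrak{a}$ over $\mathfrak{m}$, and the multiplicity-two socle bound alone delivers only $\dim_{k_{\mathfrak{a}}}\T_{\mathfrak{a}}/(2)\geq 3$. To gain the final dimension I would argue that the non-semisimplicity of $\overline{\rho}(\Frob_2)$ forces a second nilpotent deformation direction at $2$: concretely, build the weight-one CM form $\theta_{\chi}$, pass to weight two by multiplying by the weight-one lift of the Hasse invariant from Theorem~\ref{thm22}, and analyze the resulting local Hecke algebra through Carayol's construction \cite{MR1279611} of a representation $G_{\Q}\to\GL_2(\T_{\mathfrak{a}})$, showing that the non-split unramified local structure at $2$ enlarges $\T_{\mathfrak{a}}/(2)$ onto a four-dimensional Artinian $k_{\mathfrak{a}}$-algebra. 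The main obstacle is precisely this last step — certifying that the local condition at $2$ contributes the extra factor of $2$ rather than collapsing the multiplicity-two contribution, i.e.\ ruling out $\dim_{k_{\mathfrak{a}}}\T_{\mathfrak{a}}/(2)=3$. I expect this to require a direct tangent-space computation for the relevant deformation problem, in the spirit of \cite{MR2460912} and \cite{MR2129709}, rather than any of the geometric or level-lowering shortcuts used in the earlier sections; the same computation should also confirm the multiplicity claims at $\mathfrak{a}_1,\mathfrak{a}_2$ in the split case.
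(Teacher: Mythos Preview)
Your proposal is not a proof: you explicitly leave the inert and degenerate-split cases to an unperformed tangent-space computation, and even the case you present as settled has a gap. When $N\equiv 7\bmod 8$ and $\overline{\rho}(\Frob_2)$ has two distinct eigenvalues, Wiese's multiplicity-two results do not give multiplicity two in each $J_0(N)[\mathfrak{a}_i]$ separately; Corollary~4.5 of \cite{MR2336635} requires $\overline{\rho}(\Frob_2)$ to be scalar (this is how it is invoked in the double-root case of Section~\ref{sec21}), while Theorem~4.4 concerns $J_0(N)[\mathfrak{m}]$ rather than the individual $\mathfrak{a}_i$. More fundamentally, your passage from ``multiplicity $m$'' to ``the socle of $\T_{\mathfrak{a}}/(2)$ has dimension $m$'' is unjustified: that identification would go through freeness of the $\mathfrak{a}$-adic Tate module over $\T_{\mathfrak{a}}$, which is precisely what fails once multiplicity exceeds one. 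Without Proposition~\ref{prop2} (which, as you correctly note, requires $\overline{\rho}$ totally real) there is no clean inequality linking multiplicity to $\dim_{k_{\mathfrak{a}}}\T_{\mathfrak{a}}/(2)$, so neither your $\geq 3$ nor your $2+2$ is established.

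The paper does something entirely different and much simpler. Your observation that $2$ is unramified in $K=\Q(\sqrt{-N})$ is the key, but it is good news rather than an obstruction: the Artin conductor of $\Ind_K^{\Q}\chi$ is $N$, not $4N$, so the weight-one CM eigenform $f$ already lives at level $\Gamma_1(N)$ and no level-lowering is required --- the ramified-at-$2$ hypothesis of Theorem~\ref{calem} is simply irrelevant here. One then produces weight-$2$ forms in $S_2(\Gamma_0(N))$ both from $f\cdot g$ (with $g\equiv 1\bmod 2$ of the correct nebentypus, as in Section~\ref{sec22}) and from $f^2$ (whose nebentypus $\chi_{K/\Q}^2$ is trivial), and the reductions of these give independent contributions to $S_2(N)_{\mathfrak{m}}$. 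This argument is \cite[Proposition~14]{kedlaya2019}, and the paper simply cites it.
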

\begin{proof}
This was shown in \cite[Proposition 14]{kedlaya2019} using essentially the same method as we use in sections \ref{sec22} and \ref{sec32}.  The only differences are that $K/\Q$ is unramified at $2$ so the Artin conductor of $\overline{\rho}'$ is $N$, not $4N$, so no level-lowering is required; and that we obtain a second eigenspace from our modular form $f$ coming from the reduction of $f^2$.
\end{proof}
\section{The effect of $U_N$}
In none of our proofs did we ever exploit the fact that $U_N$ is not defined to be in $\T_2^{\text{an}}$ as we did with $T_2$, and the following gives an explanation why.

\begin{lem}There is an inclusion $U_N\in\T_2^{\textup{an}}$, so $\T_2=\T_2^{\textup{an}}[T_2]$.\end{lem}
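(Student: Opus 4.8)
The plan is to show that the operator $U_N$, which a priori lies only in the full Hecke algebra $\T_2 = \T_2^{\text{an}}[T_2,U_N]$, is in fact already expressible as a power series (or rather a polynomial, since everything is finite over $\Z_2$) in the prime-to-$2N$ Hecke operators. The key structural input is that $U_N$ satisfies a quadratic relation and that its two ``eigenvalues'' on the relevant spaces are forced to be $\pm1$ by the theory of newforms and the Atkin--Lehner involution at $N$. Concretely, on the $N$-new part of $S_2(\Gamma_0(N),\overline{\Z}_2)$ (and for $N$ prime the whole space is $N$-new since the old subspace coming from level $1$ is trivial), the operator $U_N$ equals $-w_N$ where $w_N$ is the Atkin--Lehner involution, so $U_N^2 = 1$ as an operator. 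Hence $U_N$ is annihilated by $X^2-1 = (X-1)(X+1)$ in $\T_2\otimes\Q_2$, and over $\Z_2$ the minimal polynomial of $U_N$ divides $(X-1)(X+1)$ up to a power of $2$; in particular $U_N$ is a root of a monic polynomial whose reduction mod $2$ is $(X-1)^2 = (X+1)^2$.

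The steps I would carry out, in order: first, decompose $\T_2 = \bigoplus_{\mathfrak{a}} \T_{\mathfrak{a}}$ over the maximal ideals of $\T_2$, reducing the claim to showing $U_N \in \T^{\text{an}}_{\mathfrak{a}}$ for each $\mathfrak{a}$, where $\T^{\text{an}}_{\mathfrak{a}}$ denotes the image of $\T_2^{\text{an}}$ in $\T_{\mathfrak{a}}$. Second, observe that since $\T_{\mathfrak{a}}$ is a complete local $\Z_2$-algebra with residue characteristic $2$, and since $U_N^2 = 1$ in $\T_{\mathfrak{a}}\otimes\Q_2$ (hence $(U_N-1)(U_N+1)=0$ already in $\T_{\mathfrak{a}}$, as $\T_{\mathfrak{a}}$ is $\Z_2$-flat and sits inside its own rationalization), the element $\tfrac{1+U_N}{2}$ would be an idempotent — but there are no nontrivial idempotents in a local ring, so in fact $U_N = 1$ or $U_N = -1$ in $\T_{\mathfrak{a}}\otimes\Q_2$; since $1 = -1$ mod the maximal ideal, $U_N \equiv 1 \pmod{\mathfrak{m}_{\mathfrak{a}}}$ in all cases. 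Third — and this is the actual content — identify the constant $\varepsilon_{\mathfrak{a}} \in \{\pm1\}$ with $U_N = \varepsilon_{\mathfrak{a}}$ on $\T_{\mathfrak{a}}\otimes\Q_2$ in terms of the eigenform $f$ cut out by $\mathfrak{a}$: by Atkin--Lehner--Li theory the $U_N$-eigenvalue of a weight-$2$ newform of prime level $N$ is $-a_N(f) = \pm1$ and is determined by the sign of the functional equation, but more usefully it is pinned down by $a_N^2 = 1$ together with congruence/interpolation from the $a_\ell$. The way to pull $\varepsilon_{\mathfrak{a}}$ into $\T_2^{\text{an}}$: use the fact that for a weight-$2$ form of level $N$, Hecke's formula relates $a_N$ to the value of a theta-like expression, or more simply note that the function $\mathfrak{a} \mapsto \varepsilon_{\mathfrak{a}}$ is locally constant on $\Spec \T_2^{\text{an}}$ and takes values in $\{\pm1\} \subseteq \Z_2$, hence defines an element of $\T_2^{\text{an}} = \prod_{\mathfrak a} \T^{\text{an}}_{\mathfrak a}$ provided it is already ``seen'' by the anemic algebra — which it is, because the sign is determined by the mod-$p$ reduction data of the $T_\ell$ via the constraint that $\rho_f$ extend compatibly, or can be read off from any single $T_\ell$ relation forcing $a_N$. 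Concretely, I would invoke that $\T^{\text{an}}_2$ and $\T_2$ have the same rationalization on each eigenform-component (distinct eigenforms are separated by their prime-to-$2N$ coefficients, by strong multiplicity one), so $\T^{\text{an}}_2\otimes\Q_2 = \T_2\otimes\Q_2$, whence $U_N \in \T_2\otimes\Q_2$ already lies in $\T^{\text{an}}_2\otimes\Q_2$; it then remains only to check integrality, i.e.\ that this rational expression for $U_N$ actually lands in $\T^{\text{an}}_2$ and not merely $\T^{\text{an}}_2\otimes\Q_2$.

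The main obstacle is precisely this last integrality point: a priori $\T_2^{\text{an}} \subseteq \T_2$ is a finite ring extension that need not be an equality, so even though $U_N$ lies in $\T_2^{\text{an}}\otimes\Q_2$ one must rule out that it only lies in $\tfrac{1}{2}\T_2^{\text{an}}$ or worse. I expect this is handled by the same idempotent argument: write the locally constant sign function $\varepsilon = \sum_{\mathfrak a}\varepsilon_{\mathfrak a} e_{\mathfrak a}$ where $e_{\mathfrak a}$ are the (rational) idempotents cutting out the components; these idempotents need not be integral, but $\varepsilon$ itself is the image of $U_N$ so it IS integral in $\T_2$, and one must show it is integral in $\T_2^{\text{an}}$. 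This follows if one can exhibit $\varepsilon$ as an honest $\Z_2$-polynomial in the $T_\ell$ — for instance by a congruence argument showing that some fixed polynomial in finitely many $T_\ell$ agrees with $U_N$ on every $\overline{\F}_2$-point and then lifting, using that $\T_2^{\text{an}}$ is $2$-adically complete and the discrepancy is $2$-power torsion-free. Alternatively, and perhaps most cleanly, one can cite that for $N$ prime the Atkin--Lehner operator $w_N$ acts on $J_0(N)$ and the induced operator on $\T_2$ lies in $\T_2^{\text{an}}$ because it is characterized Galois-theoretically (it realizes the outer twist relating $\rho_f$ to $\rho_f \otimes (\text{nebentypus})$, which is trivial here, so $w_N$ commutes with everything and is determined by traces of Frobenius at $\ell \nmid 2N$), giving $U_N = -w_N \in \T_2^{\text{an}}$ directly, and therefore $\T_2 = \T_2^{\text{an}}[T_2]$ as claimed.
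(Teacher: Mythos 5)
There is a genuine gap, and it sits exactly where you flag ``the main obstacle'': the integrality of $U_N$, i.e.\ showing it lies in $\T_2^{\text{an}}$ and not merely in $\T_2^{\text{an}}\otimes\Q_2$. Your reduction up to that point is partly sound --- $a_N(f)=\pm1$ for every form of prime level $N$ and weight $2$, and strong multiplicity one does give $\T_2^{\text{an}}\otimes\Q_2=\T_2\otimes\Q_2$ --- but neither device you offer to close the integrality gap works. First, the idempotent argument is invalid: $\tfrac{1+U_N}{2}$ lives in $\T_{\mathfrak{a}}\otimes\Q_2$, which is a product of fields with plenty of nontrivial idempotents; locality of $\T_{\mathfrak{a}}$ itself is irrelevant unless you already know $\tfrac{1+U_N}{2}\in\T_{\mathfrak{a}}$, which is precisely what is in question. (Compare $\Z_2[x]/(x^2-1)$: a local ring in which $x^2=1$, yet $x\neq\pm1$ and $\tfrac{1+x}{2}$ is not integral.) So you cannot conclude that $U_N$ is a constant $\varepsilon_{\mathfrak{a}}\in\{\pm1\}$ on each local component; nothing in your argument rules out two congruent eigenforms in the same component carrying opposite Atkin--Lehner signs. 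Second, the ``alternative'' route --- that $w_N$ is ``characterized Galois-theoretically'' and ``determined by traces of Frobenius'' --- is not an argument: an operator can commute with $\T_2^{\text{an}}$ and be determined by Galois data without belonging to $\T_2^{\text{an}}$.

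The paper closes exactly this gap by a mechanism your proposal lacks: after disposing of the Eisenstein case (Mazur: the Eisenstein maximal ideal is generated by a single $1+\ell-T_\ell$) and the irreducible-but-not-absolutely-irreducible and unramified-at-$N$ cases (via Tate's theorem on $2$-extensions unramified outside $2$), it invokes Carayol's theorem to realize the Galois representation \emph{integrally}, with values in $\GL_2(\T_{\mathfrak{m}}^{\text{an}})$. A Nakayama argument puts tame inertia at $N$ in the form $\left(\begin{smallmatrix}1&1\\0&1\end{smallmatrix}\right)$, the relation $\sigma\tau\sigma^{-1}=\tau^N$ forces $\rho(\sigma)$ to be upper triangular, and its lower-right entry --- an honest element of $\T_{\mathfrak{m}}^{\text{an}}$ --- specializes to $U_N(f_i)$ for every eigenform $f_i$, hence equals $U_N$. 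Without some such integral realization (or an equivalent substitute), the statement does not follow from $U_N^2=1$ and multiplicity one alone.
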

\begin{proof}Since $\T_2^{\text{an}}=\bigoplus_{\mathfrak{m}}\T_{\mathfrak{m}}^{\text{an}}$, it suffices to prove that $U_N\in\T_{\mathfrak{m}}^{\text{an}}$ for each maximal ideal $\mathfrak{m}$.  Let \[\overline{\rho}=\overline{\rho}_{\mathfrak{m}}:G_{\Q}\rightarrow\GL_2(\T_{\mathfrak{m}}^{\text{an}}/\mathfrak{m})\subseteq\GL_2(\overline{\F}_2)\]denote the residual representation associated to $\mathfrak{m}$.  If $\overline{\rho}$ is not irreducible, then it is Eisenstein.  The Eisenstein ideal $\mathfrak{I}\subseteq\T_2$ is generated by $1+\ell-T_{\ell}$ for $\ell\neq N$ and by $U_N-1$.  Let $\mathfrak{a}=(2,\mathfrak{I})$ denote the corresponding maximal ideal of $\T_2$.  By \cite[Proposition 17.1]{MR488287}, the ideal $\mathfrak{a}$ is actually generated by $\eta_{\ell}:=1+\ell-T_{\ell}$ for a suitable good prime $\ell\neq 2, N$.  But this implies that $\T_{\mathfrak{m}}^{\text{an}}=\T_{\mathfrak{a}}$ and that $U_N$ (and $T_2$) lie in $\T_{\mathfrak{m}}^{\text{an}}$.  Hence we assume that $\overline{\rho}$ is irreducible.

If $\overline{\rho}$ is irreducible but not absolutely irreducible, then its image would have to be cyclic of degree prime to $2$.  Since the image of inertia at $N$ is unipotent it has order dividing $2$.  Thus this would force $\overline{\rho}$ to be unramified at $N$.  There are no nontrivial odd cyclic extensions of $\Q$ ramified only at $2$, and thus this does not occur, and we may assume that $\overline{\rho}$ is absolutely irreducible.

Tate proved in \cite{Tate} the following theorem:
\begin{thm}[Tate]\label{tates}Let $G$ be the Galois group of a finite extension $K/\Q$ which is unramified at every odd prime.  Suppose there is an embedding $\rho: G\xhookrightarrow{}\SL_2(k)$, where $k$ is a finite field of characteristic $2$.  Then $K\subseteq\Q(\sqrt{-1},\sqrt{2})$ and $\Tr\rho(\sigma)=0$ for each $\sigma\in G$.\end{thm}

If $\overline{\rho}$ is unramified at $N$, then $\det\overline{\rho}$ is a character of odd order unramified outside $2$, which by Kronecker-Weber must be trivial, so $\overline{\rho}$ maps to $\SL_2(k)$.  We may apply Theorem \ref{tates} to determine that $\overline{\rho}$ has unipotent image, which therefore is not absolutely irreducible.  Hence we may assume that $\overline{\rho}$ is ramified at $N$.  By local-global compatibility at $N$, the image of inertia at $N$ of $\overline{\rho}$ is unipotent.  Because it is nontrivial, it thus has image of order exactly $2$.

Let $\{f_i\}$ denote the collection of $\overline{\Q}_2$-eigenforms such that $\overline{\rho}_{f_i}=\overline{\rho}$.  Associated to each $f_i$ is a field $E_i$ generated by the eigenvalues $T_l$ for $l\neq 2, N$.  There exists a corresponding Galois representation:\[\rho:G_{\Q}\rightarrow\GL_2(\T_{\mathfrak{m}}^{\text{an}}\tensor\Q)=\prod\GL_2(E_i).\]The traces of $\rho$ at Frobenius elements land inside $\T_{\mathfrak{m}}^{\text{an}}$, and hence the traces of all elements land inside $\T_{\mathfrak{m}}^{\text{an}}$.  By a result of Carayol, there exists a choice of basis so that $\rho$ is valued inside $\GL_2(\T_{\mathfrak{m}}^{\text{an}})$; that is, there exists a free $\T_{\mathfrak{m}}^{\text{an}}$-module of rank $2$ with a Galois action giving rise to $\rho$.  Each representation $\rho_{f_i}$ has the property that, locally at $N$, the image of inertia is unipotent.  In particular, $\rho|_{G_{\Q_N}}$ is tamely ramified.  Let $\langle\sigma, \tau\rangle$ denote the Galois group of the maximal tamely ramified extension of $\Q_N$, where $\sigma$ is a lift of Frobenius and $\tau$ a pro-generator of tame inertia, so $\sigma\tau\sigma^{-1}=\tau^N$.  We claim that there exists a basis of $(\T_{\mathfrak{m}}^{\text{an}})^2$ such that\[\overline{\rho}|_{G_{\Q_N}}=\left(\begin{matrix}1&1\\ 0&1\end{matrix}\right).\]
Note, first of all, that it is true modulo $\mathfrak{m}$ by assumption (because $\overline{\rho}$ is ramified).  Choose a lift $e_2\in(\T_{\mathfrak{m}}^{\text{an}})^2$ of a vector which is not fixed by $\overline{\rho}(\tau)$, and then let $e_1=(\rho(\tau)-1)e_2$.  Since the reduction of $e_1$ and $e_2$ generate $(\T_{\mathfrak{m}}^{\text{an}}/\mathfrak{m})^2$, by Nakayama's lemma they generate $(\T_{\mathfrak{m}}^{\text{an}})^2$.  Finally we have $(\rho(\tau)-1)^2=0$ since $(\rho_{f_i}(\tau)-1)^2=0$ for each $i$.

Now consider the image of $\sigma$.  Writing\[\rho(\sigma)=\begin{pmatrix}a&b\\ c&d\end{pmatrix}\in\GL_2(\T_{\mathfrak{m}}^{\text{an}}),\]the condition that $\rho(\sigma)\rho(\tau)=\rho(\tau)^N\rho(\sigma)$ forces $c=0$.  But then if\[\rho(\sigma)=\begin{pmatrix}*&*\\ 0&x\end{pmatrix}\in\GL_2(\T_{\mathfrak{m}}^{\text{an}}),\]then for every specialization $\rho_{f_i}$, the action of Frobenius on the unramified quotient is $x$.  But for each $\rho_{f_i}$, the action of Frobenius on the unramified quotient is the image $U_N(f_i)$ of $U_N$.  Hence this implies that $x=U_N$, and thus that $U_N\in\T_{\mathfrak{m}}^{\text{an}}$.\end{proof}
\section{Acknowledgments}
The author would like to thank Frank Calegari for the many useful conversations leading toward the proofs in sections $1$ through $4$, and from whom the proof of Lemma 5.1 was communicated.
\nocite{*}
\bibliographystyle{alpha}
\bibliography{LaTeX2bib}
\end{document}